\tikzstyle{vert} = [circle,fill=black, minimum size=2mm, inner sep=0pt]
\tikzstyle{edge} = [thick]
\newenvironment{poc}{\begin{proof}[of Claim]}{\end{proof}}
\newtheorem{theorem}{Theorem}[section]
\newtheorem{proposition}[theorem]{Proposition}
\newtheorem{lemma}[theorem]{Lemma}
\newtheorem{corollary}[theorem]{Corollary}
\newtheorem{remark}[theorem]{Remark} 
\newtheorem*{clm}{Claim}
\newcommand{\tww}{\mathsf{tww}}
\newcommand{\cart}[2]{#1\,\square\, #2}
\newcommand{\lex}[2]{#1\,\circ \, #2}
\newcommand{\gmod}[2]{#1\,\diamond \, #2}
\newcommand{\rooted}[2]{#1\,\sharp \, #2}
\newcommand{\sub}{\operatorname{Sub}}
\newcommand{\cor}[2]{#1\,\bigcirc \, #2}
\newcommand*\circled[1]{\tikz[baseline=(char.base)]{
		\node[shape=circle,draw,inner sep=1pt] (char) {#1};}}
\DeclareMathOperator{\zigzag}{\circled{{\rm z}}}
\DeclareMathOperator{\replace}{\circled{{\rm r}}}
\author[William Pettersson and John Sylvester]{William Pettersson\affiliationmark{1}\thanks{Supported by Engineering and Physical Sciences Research Council (ESPRC) grant number EP/T004878/1.}
  \and John Sylvester\affiliationmark{1,2}\thanks{Supported by Engineering and Physical Sciences Research Council (ESPRC) grant number EP/T004878/1.}}
\title[Bounds on the Twin-Width of Product Graphs]{Bounds on the Twin-Width of Product Graphs}
\affiliation{
  School of Computing Science, University of Glasgow, UK\\
  Department of Computer Science, University of Liverpool, UK}
\keywords{Twin-width, graph products, Hamming graphs.}
\begin{document}

\publicationdata{vol. 25:1}{2023}{18}{10.46298/dmtcs.10091}{2022-09-28; 2022-09-28; 2023-03-10}{2023-05-17}

\maketitle
\begin{abstract}
	
Twin-width is a graph width parameter recently introduced by Bonnet, Kim, Thomass\'{e} \& Watrigant. Given two graphs $G$ and $H$ and a graph product $\star$, we address the question: is the twin-width of $G\star H$ bounded by a function of the twin-widths of $G$ and $H$ and their maximum degrees? It is known that a bound of this type holds for strong products (Bonnet, Geniet, Kim, Thomass\'{e} \& Watrigant; SODA 2021).

We show that bounds of the same form hold for Cartesian, tensor/direct, corona, rooted, replacement, and zig-zag products. For the lexicographical product it is known that the twin-width of the product of two graphs is exactly the maximum of the twin-widths of the individual graphs (Bonnet, Kim,  Reinald, Thomass\'{e} \& Watrigant; IPEC 2021). In contrast, for the modular product we show that no bound can hold. In addition, we provide examples showing many of our bounds are tight, and give improved bounds for certain classes of graphs.
\end{abstract}

\section{Introduction}Twin-width is a recently introduced graph parameter which, roughly speaking, measures how much the neighbourhoods differ as pairs of (not necessarily adjacent) vertices are iteratively contracted until a single vertex remains~\cite{TWI}. Twin-width has already proven very useful in parameterised complexity. In particular, given a contraction sequence as input, first-order model checking is $\mathsf{FPT}$ in the formula length on classes of bounded twin-width~\cite{TWI}. Additionally, for graphs of bounded twin-width with the contraction sequence as input, there is a linear time algorithm for triangle counting \cite{kratsch2022triangle}, and an $\mathsf{FPT}$-algorithm for maximum independent set \cite{TWIII}.  Polynomial kernels for several problems \cite{Bonnet0RTW21} are also known which do not require the contraction sequence to be given as input. However, deciding if the twin-width of a graph is at most four is $\mathsf{NP}$-complete \cite{Hardness}. Bounds on twin-width are known for many graph classes~\cite{BalabanH21,TWII,TW:IV,perms,g2021stable,jacob2022bounding,simon2021ordered}.
In particular, twin-width is bounded on some classes of dense graphs (e.g.,~the class of complete graphs has twin-width zero) and on some classes of sparse graphs (e.g.,~the class of grid graphs has bounded twin-width), and indeed both graph classes of bounded tree-width and graph classes of bounded rank-width have bounded twin-width \cite{TWI,jacob2022bounding}. Several other aspects of twin-width, such as $\chi$-boundedness, colouring numbers and VC-density, have recently received attention  \cite{DreierGJMR22,TWVI,pilipczuk2022graphs,pilipczuk2021compact,przybyszewski}.

We study the behaviour of the twin-width under graph products, which are well-known tools for constructing well-structured graphs. Understanding how twin-width behaves under graph products can give useful insights into the construction of classes of graphs with either bounded or unbounded twin-width, depending on the application. For instance, it has been shown that the twin-width of the strong product of two graphs is bounded by a function of their respective twin-widths and maximum degrees~\cite{TWII}. The twin-width of wreath products of groups has also been studied \cite{Groups}. 
For graph products where the twin-width is well-behaved, we will see is most but not all of them, our results are useful for creating examples of new classes of bounded twin-width, for example see Theorem \ref{thm:bonext}. 

We now briefly introduce the graph products studied in this paper, formal definitions appear at the start of the section named after each product. We begin with the Cartesian $\square$, tensor $\times$, strong $\boxtimes$, and lexicographic $\circ$ products which are the `\emph{four standard}' graph products~\cite{HahnTardif,Handbook}. These products are all widely studied and we provide some references which are related to our results in the next paragraph. We also consider the modular product~$\diamond$ which was first described in \cite{ImrichAssociative}, then rediscovered several times \cite{BarrowMod,LeviMod,VizingMod} in connection to the subgraph isomorphism problem, see also \cite{PikeMod,RaymondMod,ShaoMod}. In addition, we study the corona product  $\bigcirc $ which was originally introduced in~\cite{Frucht1970-tk} and has since been studied in several contexts including spectral properties \cite{BarikPS07}, $k$-domination \cite{ChellaliFHV12} and total-colouring \cite{MohanGS17}.  Similarly the rooted product $\sharp $, was introduced in~\cite{GodsilMcKay} for its spectral properties. This product can be seen as one layer of a Cartesian product and has been studied in several contexts \cite{KohRooted,JakovacRooted,RosenfeldRooted}. The above products can be taken between any two graphs $G$ and $H$, we also study the zig-zag $\zigzag$ and replacement $\replace$ products which are only defined for regular graphs. The zig-zag product was introduced in~\cite{Reingold} for the efficient construction of bounded degree expanders, the replacement product has been around longer but has also been used for this purpose \cite{Alon}.

The question of whether, given a graph parameter $w$ and a graph product $\star$, $w(G\star H)$ is bounded by a function of $w(G)$ and $w(H)$ is natural and has been studied many times. One example is Hedetniemi's conjecture \cite{Hedetniemi} which states that for the chromatic number $\chi$, the tensor product $\times$, and any graphs $G,H$, we have  $\chi(G\times H)= \min\{\chi(G),\chi(H) \}$. This conjecture was recently proved false~\cite{Shitov}, by the description of a pair of graphs $G$ and $H$ satisfying $\chi(G\times H)< \min\{\chi(G),\chi(H) \}$. Interestingly if one replaces the tensor product $\times$ with the Cartesian product $\square$ then Hedetniemi's conjecture holds \cite{Sabidussi}, that is $\chi(\cart{G}{H})= \min\{\chi(G),\chi(H) \}$. Vizing's conjecture \cite{Vizing} on the other hand, is of a similar vintage (posed in 1968) and is still open. The conjecture states that if $\gamma(G)$ is the size of the smallest dominating set in $G$, then for any graphs $G$ and $H$ we have $ \gamma(G\square H ) \geq \gamma(G)\cdot \gamma(H)$. This conjecture is known to be true up to a multiplicative factor of a half \cite{ClarkS00,SuenT12}, and known to be false for tensor products \cite{KlavzarVizConj}. Additionally, if $\alpha(G)$ is the size of the largest independent set then for the lexicographic product then $\alpha(\lex{G}{H})= \alpha(G)\cdot \alpha(H)$ holds for any graphs $G$ and $H$ \cite{GELLER197587}. It is known \cite[Lemma 9]{Bonnet0RTW21} (also see Theorem \ref{thm:lex} for an alternative proof) that the lexicographic product is similarly well behaved with respect to twin-width. The clique-width of various graph products has also been studied \cite{Gurski17}.

\subsection{Our Results}

As mentioned above the main question we address is, given a graph product $\star$, what can be said of $\tww(G \star H)$ for any graphs $G$ and $H$? Our main finding is that for all graphs products studied (with the exception of the modular product) the twin-width of a product of two graphs is functionally bounded by the twin-width and maximum degrees of the individual graphs. These upper bounds for the twin-width of graph products are summarised in Table \ref{tbl:results}. Note that twin-width is not monotone with respect to non-induced subgraphs, thus none of these bounds are necessarily implied by any of the others.

\begin{table}[ht] \begin{tabular*}{\textwidth}{l   c  l}\toprule
		Product Name $\star$ & Upper Bound on $\tww(G \star H)$ &  Reference  \\ \midrule
		\addlinespace[1\defaultaddspace]
		
		Cartesian  $\square$& $\max\{\tww(G) +\Delta(H), \;\tww(H)  \} + \Delta(H)$ & Theorem \ref{thm:carti}  \\ \addlinespace[1\defaultaddspace]
		
		Tensor  $\times$ & $ \max\big\{\tww(G)\cdot \Delta(H) + \Delta(H), \;\tww(H)\big\}  + \Delta(H)$ & Theorem \ref{thm:tensor}  \\ \addlinespace[1\defaultaddspace]
		
		Strong  $\boxtimes$ & $ \max\left\{\tww(G)(\Delta(H)+1)+\Delta(H),\; \tww(H)\right\}+\Delta(H)$ &  \cite{TWII}  \\ \addlinespace[1\defaultaddspace]

		Lexicographic  $\circ$ & $\max\{\tww(G), \tww(H)\}$ \quad (\textbf{equality})  & \cite{Bonnet0RTW21}   \\ \addlinespace[1\defaultaddspace]

		Modular  $\diamond$ & \textbf{Not bounded by any} $f(\tww(G), \tww(H),\Delta(G),\Delta(H))$ & Theorem \ref{thm:mod-lb}    \\ \addlinespace[1\defaultaddspace]
		Corona  $\bigcirc $ &  $ \max\{\tww(G)+1, \tww(H), 2\}$ & Theorem \ref{thm:corona}   \\ \addlinespace[1\defaultaddspace]
		Rooted  $\sharp$ & $ \max \{ \tww(H) + 1, \Delta(H), \tww(G) + 1, 2\}$ & Theorem \ref{thm:rooted}    \\ \addlinespace[1\defaultaddspace]
		
		Replacement  $\replace$ & $    \tww(G) + \Delta(G)$ & Theorem \ref{replace}   \\ \addlinespace[1\defaultaddspace]
		
		Zig-Zag  $\zigzag$ & $\max\left\{ \Delta(H)^2\cdot (\Delta(G) -\Delta(H) +1) ,\; \tww(G)+ \Delta(G) \right\}$ & Theorem \ref{zigzag}   \\ \addlinespace[1\defaultaddspace]

		\bottomrule
	\end{tabular*}\caption{Bounds on the twin-width of product graphs, see relevant sections for definitions of the graph products. The lexicographic bound is an equality, i.e.\ $\tww(\lex{G}{H})= \max\{\tww(G),\tww(H)\} $.}
	\label{tbl:results}
\end{table} 
Using our upper bounds we can determine the twin-width of several families of graphs, for example rook graphs, Hamming graphs, and weak powers of cliques. We show tightness of our bounds on the twin-width of Cartesian, tensor and corona products. We also show tightness of the bound for strong and lexicographical products proved by \cite{TWII} and \cite{Bonnet0RTW21} respectively. We also show that our bound on rooted products is almost tight. We prove that twin-width of a modular product of two graphs cannot (in general) be functionally bounded by twin-width and maximum degrees of the individual graphs by showing that the twin-width of the modular product of two paths diverges. 

For the replacement product we prove tightness up-to a constant factor, however we leave open whether our bound for the zig-zag product is tight - we strongly suspect this can be improved. Additionally, we prove a lower bound for replacement product graphs which is a constant times the square root of their degree.  

It is known \cite[Theorem 2.9]{TWII} that if $\mathcal{C}$ is a hereditary class of graphs that has  bounded twin-width, then it is $K_{t,t}$ free if and only if the subgraph closure of $\mathcal{C}$, denoted by $\sub(\mathcal{C})$, has bounded twin-width. This fact can then be used in combination with a bound on the twin-width of strong products to establish a further bound on the twin-width of the subgraph closure of a strong products of two classes \cite[Theorem 2.8]{TWII}. Our bounds can be applied in the exact same way to extend this result, as follows. 

\begin{theorem}[extension of {\cite[Theorem 2.8]{TWII}}] \label{thm:bonext}
	Let $\star$ be one of the Cartesian $\square$, tensor $\times$, strong $\boxtimes$, lexicographic $\circ$, corona  $\bigcirc $, rooted $\sharp$, replacement $\replace$, or zig-zag $\zigzag$ products and let $\mathcal G$ and $\mathcal H$ two classes such that $\mathcal G \star \mathcal H$ is $K_{t,t}$-free. Then there exists some function $f$ such that  \begin{equation*}\tww(\sub(\mathcal G \star \mathcal H)) \leqslant f(\tww(\mathcal G),\;\tww(\mathcal H),\;\Delta(\mathcal G),\;\Delta(\mathcal H),\;t).\end{equation*}
\end{theorem}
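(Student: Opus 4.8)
The plan is to run the proof of \cite[Theorem~2.8]{TWII} essentially verbatim, replacing the bound it uses for the strong product by our bound for whichever product $\star$ is under consideration; the key observation is that nothing in that argument is special to the strong product, only that $\tww(G\star H)$ is controlled by $\tww(G),\tww(H),\Delta(G),\Delta(H)$. Write $w_{\mathcal G}=\tww(\mathcal G)$, $w_{\mathcal H}=\tww(\mathcal H)$, $d_{\mathcal G}=\Delta(\mathcal G)$, $d_{\mathcal H}=\Delta(\mathcal H)$ (assumed finite, else there is nothing to prove), and set $\mathcal C:=\mathcal G\star\mathcal H=\{\,G\star H:G\in\mathcal G,\ H\in\mathcal H\,\}$. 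First I would apply the relevant row of Table~\ref{tbl:results} --- Theorem~\ref{thm:carti}, Theorem~\ref{thm:tensor}, \cite{TWII}, \cite{Bonnet0RTW21}, Theorem~\ref{thm:corona}, Theorem~\ref{thm:rooted}, Theorem~\ref{replace} or Theorem~\ref{zigzag}, according to the choice of $\star$ --- to obtain a function $g_\star$ with $\tww(G\star H)\le g_\star(w_{\mathcal G},w_{\mathcal H},d_{\mathcal G},d_{\mathcal H})$ for all $G\in\mathcal G$, $H\in\mathcal H$. Hence $\mathcal C$ has twin-width at most $g_\star(w_{\mathcal G},w_{\mathcal H},d_{\mathcal G},d_{\mathcal H})$.

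Next I would pass to the hereditary closure $\overline{\mathcal C}$ of $\mathcal C$, i.e.\ the class of all induced subgraphs of members of $\mathcal C$. Since twin-width is monotone under induced subgraphs (a contraction sequence restricts), $\overline{\mathcal C}$ still has twin-width at most $g_\star(w_{\mathcal G},w_{\mathcal H},d_{\mathcal G},d_{\mathcal H})$; and since a $K_{t,t}$ appearing as a subgraph of an induced subgraph of some $F\in\mathcal C$ is in particular a subgraph of $F$, the hypothesis that $\mathcal C$ is $K_{t,t}$-free gives that $\overline{\mathcal C}$ is $K_{t,t}$-free. Now $\overline{\mathcal C}$ is hereditary, of bounded twin-width, and $K_{t,t}$-free, so \cite[Theorem~2.9]{TWII} applies and yields that $\sub(\overline{\mathcal C})$ has bounded twin-width, with a bound of the form $h\big(g_\star(w_{\mathcal G},w_{\mathcal H},d_{\mathcal G},d_{\mathcal H}),\,t\big)$ for the function $h$ furnished by that theorem. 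Finally, $\mathcal C\subseteq\overline{\mathcal C}$ and every induced subgraph is a subgraph, so $\sub(\mathcal C)=\sub(\overline{\mathcal C})$; setting $f(a,b,c,d,t):=h(g_\star(a,b,c,d),t)$ then completes the proof.

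The only place that needs attention --- and the closest thing to an obstacle --- is the bookkeeping around hereditariness. The class $\mathcal G\star\mathcal H$ is in general not hereditary, because an induced subgraph of $G\star H$ need not decompose as a product of induced subgraphs of $G$ and $H$, so it cannot be fed directly into \cite[Theorem~2.9]{TWII}; passing to $\overline{\mathcal C}$ fixes this, and one only has to check that this step preserves boundedness of twin-width and $K_{t,t}$-freeness and leaves the subgraph closure unchanged, all of which are immediate. Everything else is a routine composition of bounds already proved in the paper.
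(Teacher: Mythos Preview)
Your proposal is correct and follows precisely the approach the paper intends: the paper does not give a detailed proof of Theorem~\ref{thm:bonext} but simply states that ``our bounds can be applied in the exact same way'' as \cite[Theorem~2.8]{TWII}, namely by combining the product bounds in Table~\ref{tbl:results} with \cite[Theorem~2.9]{TWII}. You have correctly fleshed out the details, including the passage to the hereditary closure needed to invoke \cite[Theorem~2.9]{TWII}, which the paper leaves implicit.
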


\subsection{Organisation of the Paper}
We begin in Section \ref{sec:notation} with some notation and definitions. Sections \ref{sec:standrd} \& \ref{sec:nonstandrd} contain our findings for the twin-width of each graph product. Each subsection of Sections \ref{sec:standrd} \& \ref{sec:nonstandrd} begins with the definition of a graph product $\star$ before stating our results for it. We finish with some conclusions in Section \ref{sec:conclude}. 

\section{Notation and Definitions}\label{sec:notation}
\paragraph{Elementary Definitions.} Throughout the paper we assume all graphs are simple, that is they have no loops or multiple edges. Given a graph $G$, the neighbourhood of a vertex $v\in V(G)$, written $N_G(v)$, is the set $\{u : uv \in E(G)\}$.
The degree of a vertex $v$ in $V(G)$, written $d_G(v)$, is the size of $N_G(v)$ (i.e.,~$d_G(v) = |N_G(v)|$). We drop subscripts when the graph is clear from the context. We let $\Delta(G)=\max_{v\in V(G)}d_G(v)$ denote the maximum degree of a graph $G$ taken over all vertices in $V(G)$. The graph distance between two vertices $u,v\in V(G)$ is denoted by $d_G(u,v)$. We use $K_n$ to denote a clique on $n$ vertices, $K_{n,m}$ to denote a complete bipartite graph with bags of size $n$ and $m$, and $P_n$ to denote a path on $n$ vertices. For two graphs $G$ and $H$, we write $G \cong H$ to mean that $G$ is isomorphic to $H$, and for two sets $S,K$ we let $S\triangle K = (S \setminus K) \cup (K \setminus S)   $ denote their symmetric difference. 

\paragraph{Graph Products.} Let $G$ and $H$ be any two graphs, and let $V(G) \times V(H) = \{(u,i) : u\in V(G), i\in V(H) \}$ be the product of their vertex sets. Note that, to make proofs easier to follow, we will use letters from $\{u,v,w\}$ for vertices from $G$, and letters from $\{i,j,k\}$ for vertices from $H$.
We consider several graph products $G\star H$ in this paper, all will have the vertex set $V(G)\times V(H)$ but the edge sets will depend on the product $\star$ and the graphs $G$ and $H$. See the relevant sections for definitions of the different products.

\paragraph{Trigraphs, contraction sequences, and twin-width of a graph.} Following the notation introduced in \cite{TWI}, a \emph{trigraph $G$} has vertex set $V(G)$, a set $E(G)$ of black edges, and a set $R(G)$ of red edges (which are thought of as `error edges'), with $E(G)$ and $R(G)$ being disjoint.
In a trigraph $G$, the neighbourhood $N_G(v)$ of a vertex $v\in V(G)$ consists of all the vertices adjacent to $v$ by a black or red edge. A $d$-trigraph is a trigraph $G$ such that the \emph{red graph} $(V(G),R(G))$ has degree at most~$d$. In that instance, we also say that the trigraph has \emph{red degree} at most~$d$.

A \emph{contraction} in a trigraph~$G$ consists of merging/contracting two (non-necessarily adjacent) vertices $u$ and $v$ into a single vertex $w$, and updating the edges of $G$ as follows. Every vertex of the symmetric difference $N_G(u) \triangle N_G(v)$ is linked to $w$ by a red edge, and every vertex $x$ of the intersection $N_G(u) \cap N_G(v)$ is linked to $w$ by a black edge if both $ux \in E(G)$ and $vx \in E(G)$, and by a red edge otherwise. All other edges in the graph (those not incident to $u$ or $v$) remain unchanged. The vertices $u$ and $v$, together with any edges incident to these vertices, are removed from the trigraph. To make proofs easier to follow, we will sometimes use $v$ to refer to the vertex $w$ if $v$ has been removed as part of contracting $u$ and $v$ into $w$. That is, one can think of the new vertex $w$ as having both the label $v$ and the label $u$.

A \emph{$d$-sequence} is a sequence of $d$-trigraphs $G_n, G_{n-1}, \ldots, G_1$, where $G_n = G$, $G_1=K_1$ is the graph on a single vertex, and $G_{i-1}$ is obtained from $G_i$ by performing a single contraction of two (non-necessarily adjacent) vertices. Note that $G_i$ has precisely $i$ vertices, for every $i \in [n]$.
The twin-width of $G$, denoted by $\tww(G)$, is the minimum integer~$d$ such that $G$ admits a~$d$-sequence.

We say that $G$ is a \emph{trigraph over a graph $H$} if $(V(G),E(G) \cup R(G))$ is isomorphic to $H$. 

\begin{lemma}[{\cite[Lemma 7.1]{TWII}}]\label{lem:degreetww}
	Every trigraph over a graph $H$ has twin-width at most $\tww(H)+\Delta(H)$.
\end{lemma}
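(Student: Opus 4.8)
The plan is to take an optimal contraction sequence for the underlying graph $H$ and apply \emph{the same} sequence of contractions to the trigraph $G$, checking that the red degree never exceeds $\tww(H)+\Delta(H)$ along the way. To set up, I would identify $V(G)$ with $V(H)$ via the isomorphism witnessing that $G$ is a trigraph over $H$, so that $E(G)\cup R(G)=E(H)$ with $E(G)\cap R(G)=\emptyset$. Let $H_n,\dots,H_1$ be a $\tww(H)$-sequence for $H$; at stage $i$ this is really a partition $\mathcal P_i$ of $V(H)$ into $i$ parts, and performing the corresponding contractions on $G$ produces trigraphs $G_n=G,\dots,G_1$ carried by the same partitions $\mathcal P_i$.

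Next I would record the standard fact (a short induction on the number of contractions, valid for any trigraph): for distinct parts $P,Q\in\mathcal P_i$, the pair $PQ$ is a non-edge of the contracted trigraph iff no pair in $P\times Q$ is an edge, it is a black edge iff \emph{every} pair in $P\times Q$ is a black edge, and it is a red edge otherwise. Applying this simultaneously to $H$ (all of whose edges are black) and to $G$ shows two things: $H_i$ and $G_i$ have exactly the same non-edges, and every red edge of $H_i$ is also a red edge of $G_i$. Consequently the only red edges of $G_i$ not already present in $H_i$ are pairs $PQ$ that are black in $H_i$ but not black in $G_i$, and any such pair satisfies $P\times Q\subseteq E(H)$.

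The crux is then to bound, for each part $P$ of $G_i$, the number of these ``new'' red neighbours. If $Q$ is one of them, then $P\times Q\subseteq E(H)$, so fixing an arbitrary vertex $p_0\in P$ yields $Q\subseteq N_H(p_0)$; as the parts of $\mathcal P_i$ are pairwise disjoint and nonempty, there can be at most $|N_H(p_0)|\le\Delta(H)$ of them. Adding the at most $\tww(H)$ red neighbours of $P$ inherited from $H_i$, the red degree of $P$ in $G_i$ is at most $\tww(H)+\Delta(H)$; since $i$ and $P$ were arbitrary this exhibits a $(\tww(H)+\Delta(H))$-sequence for $G$, giving the bound. I do not expect a genuine obstacle here: the only delicate point is the bookkeeping that a ``new'' red pair $PQ$ of $G_i$ must be completely joined in $H$ (so that a single vertex of $P$ already sees all such $Q$), which rests on the edge-type comparison between $G_i$ and $H_i$ described above.
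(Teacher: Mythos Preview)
The paper does not actually prove this lemma: it is stated with a citation to \cite[Lemma~7.1]{TWII} and no proof is given in the present paper, so there is nothing here to compare your argument against directly. That said, your proposed proof is correct and is precisely the standard argument for this fact (and is essentially the proof given in \cite{TWII}): follow an optimal contraction sequence for $H$, observe that every red edge of $G_i$ is either already red in $H_i$ or corresponds to a black edge of $H_i$, and bound the latter by $\Delta(H)$ via the observation that a black edge $PQ$ in $H_i$ forces $Q\subseteq N_H(p_0)$ for any fixed $p_0\in P$.
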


The following lower bound is used many times in this paper.
\begin{lemma}[{\cite[Lemma 3.1]{AhnBounds}}]\label{lowerbound} Any graph $G$ satisfies \begin{equation*}\tww(G) \geq \min\{|(N(v)\triangle N(u))\backslash\{u,v\}| : u,v\in V(G), \; u\neq v \}.\end{equation*}
\end{lemma}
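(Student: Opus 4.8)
The plan is to extract the bound from the very first contraction of any optimal contraction sequence. Fix a $d$-sequence $G=G_n, G_{n-1}, \ldots, G_1$ with $d=\tww(G)$, which exists by definition of twin-width. The first step produces $G_{n-1}$ from $G_n=G$ by contracting two distinct vertices $u,v \in V(G)$ into a new vertex $w$. Since $G$ has no red edges, the neighbourhoods $N_G(u)$ and $N_G(v)$ are ordinary (black) neighbourhoods, and by the contraction rule every vertex of $N_G(u) \triangle N_G(v)$ is joined to $w$ by a red edge in $G_{n-1}$.

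Next I would observe that $u \notin N_G(u)$ and $v \notin N_G(v)$ because $G$ is simple (no loops); a short case check on whether $uv \in E(G)$ then shows $\{u,v\} \cap (N_G(u)\triangle N_G(v)) = \emptyset$, so in fact $(N_G(u)\triangle N_G(v))\setminus\{u,v\} = N_G(u)\triangle N_G(v)$. Consequently the red degree of $w$ in $G_{n-1}$ is at least $\big|(N_G(u)\triangle N_G(v))\setminus\{u,v\}\big|$ (it may be even larger once vertices of $N_G(u)\cap N_G(v)$ are taken into account, but the symmetric-difference vertices alone already suffice).

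Finally, since $G_{n-1}$ is a $d$-trigraph, its red graph has maximum degree at most $d$, hence
\begin{equation*}
d \;\geq\; \big|(N_G(u)\triangle N_G(v))\setminus\{u,v\}\big| \;\geq\; \min\{|(N(x)\triangle N(y))\setminus\{x,y\}| : x,y\in V(G),\ x\neq y\}.
\end{equation*}
As $d=\tww(G)$, this is exactly the claimed inequality. I do not expect any real obstacle here: the only point requiring (minor) care is to confirm from the contraction definition that the vertices of the symmetric difference receive \emph{red} edges and to account for the deletion of $u$ and $v$ themselves, both of which are immediate.
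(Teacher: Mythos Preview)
The paper does not supply its own proof of this lemma; it simply quotes the result from \cite{AhnBounds}. Your argument is the standard one and is correct in its overall structure: look at the first contraction in an optimal sequence and observe that the red degree of the merged vertex already forces the bound.

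There is, however, a slip in your second paragraph. The claim that $\{u,v\}\cap(N_G(u)\triangle N_G(v))=\emptyset$ is false precisely when $uv\in E(G)$: in that case $u\in N_G(v)\setminus N_G(u)$ and $v\in N_G(u)\setminus N_G(v)$, so both $u$ and $v$ lie in the symmetric difference. Fortunately you do not need this set equality at all. The reason the red degree of $w$ equals $|(N_G(u)\triangle N_G(v))\setminus\{u,v\}|$ is simply that $u$ and $v$ are \emph{removed} by the contraction, so even if they belong to the symmetric difference they cannot contribute edges to $w$ in $G_{n-1}$. Drop the erroneous equality and argue directly from the deletion of $u$ and $v$; the rest of your proof goes through unchanged.
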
  
We also use the following useful fact. 
\begin{proposition}[{\cite[Section 4.1]{TWI}}]\label{prop:induced} The twin-width of an induced subgraph $H$ of a trigraph $G$ is at most the twin-width of $G$.
\end{proposition}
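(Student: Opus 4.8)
The plan is to restrict a minimum-width contraction sequence of $G$ to the vertex set of $H$. Write $d=\tww(G)$ and fix a $d$-sequence $G=G_n,G_{n-1},\dots,G_1=K_1$. Each $G_i$ carries a partition $\mathcal P_i$ of $V(G)$ into its contraction classes, with $\mathcal P_n$ the partition into singletons, $\mathcal P_1=\{V(G)\}$, and $\mathcal P_{i-1}$ obtained from $\mathcal P_i$ by merging two classes. The structural fact I would isolate first, and prove by induction on the number of contractions already performed, is that $G_i$ is a function of $\mathcal P_i$ alone: for classes $A,B\in\mathcal P_i$, the pair $AB$ is a black edge of $G_i$ iff every pair in $A\times B$ is a black edge of $G$, a non-edge of $G_i$ iff $(A\times B)\cap(E(G)\cup R(G))=\emptyset$, and a red edge of $G_i$ otherwise. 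The induction step is the contraction rule unwound: merging two classes creates a black (resp.\ non-)edge toward a third class exactly when both old edges were black (resp.\ non-edges), which by the inductive hypothesis translates into the stated ``for all pairs'' condition, and everything else is red.

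Now put $S=V(H)$ and, for each $i$, let $\mathcal Q_i=\{\,Z\cap S:Z\in\mathcal P_i,\ Z\cap S\neq\emptyset\,\}$, a partition of $S$, and let $H_i$ be the trigraph on vertex set $\mathcal Q_i$ defined by the very same rule with respect to $H$ (equivalently, with respect to $G$, since $H$ is an induced subtrigraph): $(Z\cap S)(Z'\cap S)$ is black iff all pairs in $(Z\cap S)\times(Z'\cap S)$ lie in $E(H)$, a non-edge iff none lie in $E(H)\cup R(H)$, and red otherwise. Unwinding the definitions gives $H_n=H$ and $H_1=K_1$ (we may assume $H$ is non-empty). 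The point I would stress here is that $H_i$ need \emph{not} be an induced subtrigraph of $G_i$ — a merged class may absorb vertices outside $S$, turning a red status into black or non-edge — so the red-degree bound needs a small argument rather than the monotonicity of Proposition~\ref{prop:induced} itself: since $(Z\cap S)\times(Z'\cap S)\subseteq Z\times Z'$ and $H$ is induced, a red edge of $H_i$ witnesses either a red pair of $G$ inside $Z\times Z'$, or both a black pair and a non-edge pair there, and either way $Z$-$Z'$ is red in $G_i$. Hence under the injection $Z\cap S\mapsto Z$ the red graph of $H_i$ embeds into that of $G_i$, so $H_i$ has red degree at most $d$.

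It then remains to see that $H_n,H_{n-1},\dots,H_1$, after deleting consecutive repetitions, is a legitimate $d$-sequence for $H$. Passing from $\mathcal P_i$ to $\mathcal P_{i-1}$ merges two classes $X,Y$ into $X\cup Y$. If $X\cap S=\emptyset$ or $Y\cap S=\emptyset$, then $\mathcal Q_{i-1}=\mathcal Q_i$ and hence $H_{i-1}=H_i$, since the defining rule only sees the partition of $S$ and the trigraph $H$. Otherwise $\mathcal Q_{i-1}$ is $\mathcal Q_i$ with $X\cap S$ and $Y\cap S$ merged into $(X\cup Y)\cap S$, and I would check directly — feeding the ``for all pairs'' description into the contraction rule, case-splitting on black / non-edge / red toward a third class $W\cap S$ and noting edges not incident to the merged class are untouched — that contracting $X\cap S$ with $Y\cap S$ in $H_i$ yields exactly $H_{i-1}$. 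Stripping the repeated trigraphs leaves a $d$-sequence from $H$ down to $K_1$, so $\tww(H)\le d=\tww(G)$.

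The argument is essentially bookkeeping: the two places needing care are the structural lemma of the first paragraph and its reuse in the last, together with the observation in the second paragraph that one must track red edges directly because $H_i$ is not literally an induced subtrigraph of $G_i$. I do not foresee any genuine difficulty beyond handling the three edge types consistently when a contraction class straddles $V(H)$.
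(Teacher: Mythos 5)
The paper cites this proposition from~\cite{TWI} rather than proving it, and your restriction argument --- viewing each $G_i$ as determined by its partition of $V(G)$, intersecting each part with $V(H)$, and discarding the steps where the contraction happens entirely outside $V(H)$ --- is exactly the standard proof used there. Your write-up is correct, including the two points you single out for care: the ``partition determines the trigraph'' lemma, and the direct argument that red edges of $H_i$ inject into red edges of $G_i$ (which is indeed needed, since $H_i$ is not in general an induced subtrigraph of $G_i$).
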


\section{Bounds on Twin-width of Standard Products}\label{sec:standrd}

\subsection{Cartesian Product}

The \emph{Cartesian product} $\cart{G}{H}$ has vertex set $V(G) \times V(H)$ and
\begin{equation*} 
	(u,i)(v,j) \in E( \cart{G}{H})\quad  \text{if and only if}\quad [u = v \text{ and }ij\in E(H)]\text{ or }[i = j\text{ and }uv\in E(G)].
\end{equation*} 

\begin{restatable}{theorem}{carti}\label{thm:carti} For any graphs $G$ and $H$ we have 
	\begin{equation*} \tww\left(\cart{G}{H}\right)\leq \max\{\tww(G) +\Delta(H), \;\tww(H)  \} + \Delta(H).\end{equation*} 
\end{restatable}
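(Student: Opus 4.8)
The plan is to lift a contraction sequence witnessing $\tww(G)$ to a contraction sequence of $\cart{G}{H}$ by running it ``in parallel'' over the $|V(H)|$ copies of $G$, and then to finish off with Lemma~\ref{lem:degreetww}. Fix a $\tww(G)$-sequence $G = G_n, G_{n-1}, \dots, G_1$ and an arbitrary ordering $i_1, \dots, i_m$ of $V(H)$. Each contraction of the $G$-sequence, say $u,v\to w$ performed on $G_t$, is replaced by a \emph{block} of $m$ contractions in the product: first $(u,i_1)$ and $(v,i_1)$ into $(w,i_1)$, then $(u,i_2)$ and $(v,i_2)$ into $(w,i_2)$, and so on through $(u,i_m)$ and $(v,i_m)$ into $(w,i_m)$. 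Concatenating these blocks I call the \emph{$G$-phase}; at its end each copy of $G$ has been collapsed to a single vertex.

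The heart of the argument is a structural invariant, maintained between blocks: the current trigraph has vertex set $V(G_t)\times V(H)$, where $G_t$ is the current $G$-trigraph; its edges inside a \emph{$G$-layer} $V(G_t)\times\{i\}$ mirror $G_t$ exactly, with matching colours; its edges inside an \emph{$H$-fibre} $\{x\}\times V(H)$ form a (possibly entirely red) copy of $H$; and there are no edges between vertices differing in both coordinates. The $G$-layer property holds because a contraction $(u,i)$, $(v,i)\to(w,i)$, restricted to layer $i$, follows exactly the same rule as the contraction $u,v\to w$ in $G$. The remaining properties I would check by a direct case analysis of the new red edges a block creates; the upshot is that within a block the only transient extra edges are $(w,i_\ell)$--$(u,i_{\ell'})$ and $(w,i_\ell)$--$(v,i_{\ell'})$ for $i_{\ell'}\in N_H(i_\ell)$ with $\ell'>\ell$ (these are red and disappear once layer $i_{\ell'}$ is processed), while the already-processed layers contribute the red $H$-fibre edges $(w,i_\ell)$--$(w,i_{\ell'})$.

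The red-degree bound now follows from the invariant. A $G$-layer edge at a vertex is red only if the corresponding edge is red in the relevant $G$-trigraph ($G_t$ or $G_{t-1}$), both of which have red degree at most $\tww(G)$, so this contributes at most $\tww(G)$. Every other red edge at a vertex goes to a vertex differing only in the $H$-coordinate, and for each $H$-neighbour of that vertex's $H$-coordinate there are at most two such red neighbours during a block (one for the unprocessed copies, or one after merging); hence at most $2\Delta(H)$ in total. So throughout the $G$-phase the red degree is at most $\tww(G) + 2\Delta(H)$. After the $G$-phase only one $G$-layer vertex survives, so the trigraph is a trigraph over $H$, and Lemma~\ref{lem:degreetww} provides a completing sequence of red degree at most $\tww(H) + \Delta(H)$. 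Appending it gives a $d$-sequence of $\cart{G}{H}$ with $d = \max\{\tww(G) + 2\Delta(H),\, \tww(H) + \Delta(H)\} = \max\{\tww(G) + \Delta(H),\, \tww(H)\} + \Delta(H)$, as claimed.

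The main obstacle is the bookkeeping for the structural invariant: confirming that processing the layers of a block in the chosen fixed order never produces a red (or black) edge other than the $G$-layer edges, the $H$-fibre edges, and the transient cross edges listed above, and in particular that no edge ever appears between two vertices differing in both coordinates. Once that case analysis is pinned down, the red-degree accounting and the appeal to Lemma~\ref{lem:degreetww} are routine.
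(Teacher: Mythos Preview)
Your proposal is correct and follows essentially the same approach as the paper: lift a $\tww(G)$-sequence to $\cart{G}{H}$ by performing each $G$-contraction in all $|V(H)|$ copies before moving to the next, bound the red degree during this $G$-phase by $\tww(G)+2\Delta(H)$ via exactly the split into in-layer and cross-layer (at most two per $H$-neighbour) red edges, and finish with Lemma~\ref{lem:degreetww} on the resulting trigraph over $H$. The paper's write-up is terser and does not spell out the structural invariant, but the strategy and the accounting are identical.
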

\begin{proof} Let $G=G_n, \dots, G_1=K_1$ be a $\tww(G)$-sequence for $G$, and label the $m$ vertices of $H$ by $[m]$ in any order. For a fixed $i \in [m]$, we call \emph{$i$-th copy of $G$}, the subgraph of $\cart{G}{H}$ induced by vertices of the form $(v,i)$ for every $v \in V(G)$. We continue to refer to all vertices $(v,i)$ as the $i$-th  copy of $G$ even if contractions have been made. 
	
	We contract $\cart{G}{H}$ to $\cart{K_1}{H}$ by following the series $G_n, \dots, G_1$ of contractions in each copy of $G$. That is, we reduce $G_n$ to $G_{n-1}$ in the first copy, then the second, etc. until we have reduced $G_n$ to $G_{n-1}$ in the $m$-th  copy, and then we return to the first copy to reduce $G_{n-1}$ to $G_{n-2}$ etc.\ We proceed like this until we are left with a trigraph over $\cart{K_1}{H}$, which is isomorphic to $H$. We then reduce this trigraph using a $\tww(H)$-sequence for $H$. 
	
	Observe that at any stage in the contraction sequence of $\cart{G}{H}$ described above there can be at most $\tww(G)$ red edges of the form $(v,i)(u,i)$, for some fixed $v\in V(G)$, in the current trigraph. This follows since we do not make any contractions between two vertices $(u,i)$ and $(v,j)$ where $j\neq i$ until there is only a single vertex left in each copy of $G$. Now observe that there are at most $2\Delta(H)$ edges of the form $(v,i)(u,j)$, where $j\neq i$, for any $v\in V(G)$ at any given time. To see this note that if $(u,i)$ and $(v,i)$ are being contracted to $(w,i)$ in the $i$-th  copy of $G$ but $(u,j)$ and $(v,j)$ have not yet been contracted in some other copies of $G$ where $j\neq i$, then the red edges $(w,i)(u,j)$ and $(w,i)(v,j)$ will be formed for each such $j$. There can be at most $2\Delta(H)$ such edges and so it follows that there is a $\tww(G)+2\Delta(H)$-sequence transforming $\cart{G}{H}$ into a trigraph over $\cart{K_1}{H}\cong H$. This trigraph admits a $\tww(H)+\Delta(H)$-sequence by Lemma \ref{lem:degreetww}. Thus $\tww(\cart{G}{H}) \leq \max\{\tww(G) + 2\Delta(H), \;\tww(H) + \Delta(H)  \} $ as claimed. \end{proof}

As $\cart{K_2}{K_2} \cong C_4$, which has twin-width $0$, Theorem \ref{thm:carti} does not always give a tight bound. However the following result shows that Theorem \ref{thm:carti} is tight for Cartesian products of two non-trivial cliques where at least one has more than two vertices. 

\begin{restatable}[Rook Graphs]{proposition}{rookgraphs}\label{prop:cartclique}For any $n,m\geq 1$ we have \begin{equation*}\tww(K_n\square K_m)=\begin{cases} 0 &\text{ if }n=m=2\text{ or } \min\{n,m\}=1\\
			2(\min\{n,m\}-1) & \text{ otherwise}
		\end{cases}.\end{equation*} 
\end{restatable}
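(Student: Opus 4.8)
The plan is to treat the degenerate cases separately and then, in the main regime $\min\{n,m\}\ge 2$ and $\max\{n,m\}\ge 3$, prove matching upper and lower bounds. For the degenerate cases: if $\min\{n,m\}=1$ then $K_n\square K_m$ is a clique, which has twin-width $0$ since any two vertices form a pair of twins; and if $n=m=2$ then $K_2\square K_2\cong C_4$, which reduces to $K_1$ without any red edge ever appearing (contract a non-adjacent pair, then the remaining non-adjacent pair). So from now on assume $n\le m$ with $n\ge 2$ and $m\ge 3$; it suffices to show $\tww(K_n\square K_m)=2(n-1)$.

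For the upper bound I would appeal to Theorem~\ref{thm:carti} together with commutativity of the Cartesian product, $K_n\square K_m\cong K_m\square K_n$. Applying the theorem with $G=K_m$ and $H=K_n$, and using $\tww(K_m)=\tww(K_n)=0$ and $\Delta(K_n)=n-1$, gives $\tww(K_m\square K_n)\le\max\{0+(n-1),\,0\}+(n-1)=2(n-1)$. Choosing the \emph{smaller} clique in the role of $H$ is exactly what makes this bound $2(n-1)$ rather than $2(m-1)$.

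For the lower bound I would invoke Lemma~\ref{lowerbound} and compute $|(N(p)\triangle N(q))\setminus\{p,q\}|$ for every pair of distinct vertices $p=(a,x)$, $q=(b,y)$, thinking of the vertex set as $[n]\times[m]$ (so $\square$ is the rook's graph: two cells are adjacent iff they agree in some coordinate). There are three cases. If $a=b$, $x\ne y$, the two neighbourhoods coincide in the common column and are disjoint in the rows, giving $2(m-1)$. If $x=y$, $a\ne b$, symmetrically one gets $2(n-1)$. If $a\ne b$ and $x\ne y$, then $N(p)\cap N(q)=\{(a,y),(b,x)\}$ while neither $p$ nor $q$ lies in $N(p)\triangle N(q)$, so the quantity equals $|N(p)|+|N(q)|-4=2(m+n-4)$. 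Since $n\le m$ we have $2(n-1)\le 2(m-1)$, and since $m\ge 3$ we have $2(m+n-4)\ge 2(n-1)$; hence the minimum over all pairs is exactly $2(n-1)$, and Lemma~\ref{lowerbound} yields $\tww(K_n\square K_m)\ge 2(n-1)$.

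Combining the two bounds gives $\tww(K_n\square K_m)=2(n-1)=2(\min\{n,m\}-1)$. The only delicate point is the third case of the lower-bound computation: there the pair quantity is $2(m+n-4)$, which exceeds $2(n-1)$ only when $m\ge 3$, which is precisely why $n=m=2$ is genuinely exceptional; care is needed to verify that neither of the special vertices $p,q$ contributes to that symmetric difference, and to check the $n=2$ boundary of the ``same column'' case.
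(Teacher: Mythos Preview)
Your proof is correct and takes essentially the same approach as the paper's: identical treatment of the degenerate cases, Theorem~\ref{thm:carti} applied with the smaller clique in the role of $H$ for the upper bound, and Lemma~\ref{lowerbound} with the same three-case analysis for the lower bound. One minor slip (which does not affect the conclusion, since both values appear in the minimum): with your convention $(a,x)\in[n]\times[m]$, the case $a=b,\ x\ne y$ actually yields $2(n-1)$ (the symmetric difference, after removing $p$ and $q$, is $\{(c,x),(c,y):c\ne a\}$) and the case $x=y,\ a\ne b$ yields $2(m-1)$, so your two ``same-line'' values are swapped.
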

\begin{proof}Observe that $K_2\square K_2\cong C_4$ and $K_1 \square K_n \cong K_n$ for any $n\geq 1$. Thus as we have $\tww(C_4)$ and $\tww(K_n)=0$, we establish the first case. We can now assume that $n,m\geq 2$ and $(n,m)\neq (2,2)$. 
	
	For the upper bound observe that as $\cart{K_n}{K_m} \cong \cart{K_m}{K_n}$ the bound \begin{equation*}\tww(\cart{K_n}{K_m})\leq \max\{0+ \Delta(K_m),0 \} + \Delta(K_m) = 2\cdot \Delta(K_m),  \end{equation*}and also the bound $\tww(\cart{K_m}{K_n})\leq 2\cdot \Delta(K_n) $ both hold by Theorem \ref{thm:carti}, as claimed.
	
	For a graph $G$ and $u,v \in V(G) $ let $b(u,v) =  |(N(v)\triangle N(u))\backslash\{u,v\}|.$ The lower bound follows from Lemma \ref{lowerbound} by considering $b(u,v)$ for all pairs of distinct vertices $(u,i),(v,i)\in V(K_n\square K_m)$. We have two cases:
	
	\noindent\textbf{Case 1 [$u=v$ or $i=j$]:} For any two vertices $u\neq v$ we have \begin{equation*}(N((u,i))\triangle N((v,i)))\setminus \{(u,i),(v,i)\} = \left\{(z,j): z\in \{u,v\}, j\in [m]\setminus \{i\} \right\}, \end{equation*} thus $b((u,i),(v,i))\geq 2(m-1)$. By symmetry we have $b((u,i),(u,j))\geq 2(n-1)$ for any $i\neq j$ and $u\in V(G)$.

	\noindent\textbf{Case 2 [$v\neq u$ and $i\neq j$]:}            For any $v\neq u$ and $i\neq j$ we have \begin{equation*}\begin{aligned}&(N((u,i))\triangle N((v,j)))\setminus \{(u,i),(v,j)\}\\ &\qquad= \left\{(z,k): z\notin \{u,v\}, \;k\in \{i,j\} \right\}\cup \left\{(z,k): z\in \{u,v\}, \; k\in [m] \setminus \{i,j\} \right\}, \end{aligned} \end{equation*}
	thus $b((u,i),(v,j)) \geq 2(n-2) + 2(m-2) \geq 2(\min\{m,n\} -1)$, as $\max\{m,n\}\geq 3$. 
\end{proof}

For integers $d,k\geq 1 $, let $ \mathbb{H}(d,k)$ be the \emph{Hamming} graph with $k^d$ vertices, defined inductively by \begin{equation*}  \mathbb{H}(d,k) = \cart{K_k}{  \mathbb{H}(d-1,k)}, \qquad \text{and}\qquad \mathbb{H}(1,k)=K_k.\end{equation*} Hamming graphs are a generalisation of Rook graphs and have applications in coding theory and distributed computing. Arguably the most well known Hamming graph is the \emph{hypercube} $ \mathbb{H}(d,2)$. Our next result determines the twin-width of all Hamming graphs and is proved by applying Theorem \ref{thm:carti} iteratively. Similarly to Rook graphs, the twin-width of Hamming graphs matches the bound in Theorem \ref{thm:carti}.    
\begin{restatable}[Hamming Graphs]{proposition}{hamming}\label{prop:hamming}For any $d,k\geq 1$ we have  \begin{equation*}\tww(  \mathbb{H}(d,k))= \begin{cases} 0 &\text{ if } d=1 \text{ or }k=1\\ 
			2(k-1)(d-2) & \text{ if } d\geq 2 \text{ and }k=2\\
			2(k-1)(d-1) & \text{ if } d\geq 2 \text{ and } k\geq 3
		\end{cases}.\end{equation*}
\end{restatable}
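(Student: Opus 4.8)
The plan is to prove matching upper and lower bounds for $d\ge 2$ and $k\ge 2$, the remaining cases being immediate: $\mathbb{H}(1,k)=K_k$ has twin-width $0$, and $\mathbb{H}(d,1)\cong K_1$ since $\cart{K_1}{G}\cong G$. For the upper bound I would peel off one clique factor at a time. Using commutativity of $\square$, write $\mathbb{H}(d,k)\cong\cart{\mathbb{H}(d-1,k)}{K_k}$ and apply Theorem~\ref{thm:carti} with $\Delta(K_k)=k-1$ and $\tww(K_k)=0$; since $\tww(\mathbb{H}(d-1,k))+(k-1)\ge 0$, this yields the recurrence
\[
\tww(\mathbb{H}(d,k)) \;\le\; \tww(\mathbb{H}(d-1,k)) + 2(k-1).
\]
For $k\ge 3$, iterating down to $\mathbb{H}(1,k)=K_k$ (or stopping at the rook graph $\mathbb{H}(2,k)$, with $\tww(\mathbb{H}(2,k))=2(k-1)$ by Proposition~\ref{prop:cartclique}) gives $\tww(\mathbb{H}(d,k))\le 2(k-1)(d-1)$. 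For $k=2$, iterating only down to $\mathbb{H}(2,2)\cong C_4$, which has twin-width $0$, gives the sharper $\tww(\mathbb{H}(d,2))\le 2(d-2)$.

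For the lower bound I would apply Lemma~\ref{lowerbound} to $\mathbb{H}(d,k)$ directly. Fix vertices $x,y$ at Hamming distance $t\in\{1,\dots,d\}$. Every vertex of $N(x)\setminus N(y)$ differs from $x$ in exactly one coordinate, and a short analysis splitting on whether that coordinate lies among the $t$ coordinates on which $x$ and $y$ disagree or among the $d-t$ on which they agree gives
\[
\bigl|\,(N(x)\triangle N(y))\setminus\{x,y\}\,\bigr| \;=\; \begin{cases} 2(k-1)(d-1) & \text{if } t=1,\\ 2(k-1)(d-2)+4(k-2) & \text{if } t=2,\\ 2(k-1)d & \text{if } t\ge 3.\end{cases}
\]
Minimising over the admissible values of $t$, the smallest quantity is $2(k-1)(d-1)$ when $k\ge 3$ (attained at $t=1$, and also at $t=2$ when $k=3$) and $2(d-2)$ when $k=2$ (now attained at $t=2$). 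By Lemma~\ref{lowerbound} these match the upper bounds, completing the proof.

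The main obstacle is the distance-$t$ computation of the symmetric difference together with the observation that the minimiser depends on $k$: for $k\ge 3$ adjacent vertices ($t=1$) are extremal, whereas in the hypercube ($k=2$) a pair of vertices at distance $2$ has strictly smaller symmetric difference than an adjacent pair. This is exactly why the hypercube has the separate closed form $2(d-2)$ and why its upper-bound recurrence must be seeded at $C_4$ rather than at $K_2$.
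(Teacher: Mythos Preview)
Your proposal is correct and takes essentially the same approach as the paper: the upper bound iterates Theorem~\ref{thm:carti}, seeding at $C_4$ for $k=2$ and at the rook graph for $k\ge 3$, and the lower bound is Lemma~\ref{lowerbound} with a case split on the Hamming distance $t\in\{1,2,\ge 3\}$. The paper writes the $t=2$ value as $2(d(k-1)-2)$, which agrees with your expression $2(k-1)(d-2)+4(k-2)$.
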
	

\begin{proof} The case $d\leq 2$ is covered by Proposition \ref{prop:cartclique}. Observe that if $d,k\geq 2$ then we can express the function in the statement as $t(d,k)=2(k-1)(d-1 -\mathbf{1}(k=2))$.  
	
	Applying Theorem \ref{thm:carti} iteratively to $  \mathbb{H}(d,k)$, for $d\geq 3$, and using the fact that $\tww(K_k)=0$ gives
	\begin{equation*}	\begin{aligned} \tww(  \mathbb{H}(d,k))&= \tww( \cart{  \mathbb{H}(d-1,k)}{K_k})\\ &\leq  \max\{\tww(  \mathbb{H}(d-1,k)) +k-1, 0 \} + k-1 \\& = \tww(  \mathbb{H}(d-1,k)) + 2(k-1) \\
			&\leq \cdots \\
			&\leq \tww(  \mathbb{H}(2,k)) + (d-2)\cdot 2(k-1).  \end{aligned}
	\end{equation*}  By Proposition \ref{prop:cartclique} we have $\tww(  \mathbb{H}(2,2))=0$ and $\tww(  \mathbb{H}(2,k))=2(k-1)$ if $k\geq 3$, thus in either case $\tww(  \mathbb{H}(d,k))\leq t(d,k)$.

	We now consider the lower bound for $d,k\geq 2$. The lower bound will follow Lemma \ref{lowerbound} by considering $b(u,v) =  |(N(v)\triangle N(u))\backslash\{u,v\}|$ for all pairs of distinct vertices $u,v$ in $  \mathbb{H}(d,k)$. It will help to consider $  \mathbb{H}(d,k)$ as follows: we identify each vertex as a string in $\{0,\dots, k-1\}^d$ where two vertices are connected if and only if their strings differ in one exactly coordinate. Note that $  \mathbb{H}(d,k)$ is $d(k-1)$-regular. We have three cases: 
	
	\noindent\textbf{Case 1 [$uv\in E(  \mathbb{H}(d,k))$]:} In this case $u$ and $v$ differ at exactly one coordinate, say $i$. Suppose there exists some $w\in N(u)\cap N(v)$ where $w\notin \{ u,v\}$. It follows that $w$ has the same value as both $u$ and $v$ at all but the $i$-th  coordinate. Since the $i$-th  coordinate of $w$ cannot match that of $u$ or $v$ this leaves $k-2$ other options. Thus $|N(u)\cap N(v)|=k-2 $, and so $b(u,v)= 2(d(k-1) -(k-2)-1) = 2(d-1)(k-1)$. Observe that if $k=2$ then $b(u,v)= 2d-2 > 2(d-2) =t(d,2)$ and if $k\geq 3$ then $b(u,v)= 2(d-1)(k-1) =t(d,k)$.  
	
	\noindent\textbf{Case 2 [$d_{  \mathbb{H}(d,k)}(u, v)=2$]:} It follows that $u$ and $v$ differ at two coordinates $i$ and $j$ where $i\neq j$. Thus if $w\in V(  \mathbb{H}(d,k))$ is adjacent to both $u$ and $v$ it must be equal to $u$ at all but one coordinate and equal to $v$ at all but one coordinate, so $|N(u)\cap N(v)|=2$. Since $uv\notin E$ we have $b(u,v)= 2(d(k-1)-2)$. Observe that if $k=2$ then $b(u,v)= 2(d-2) =t(d,2)$ and if $k\geq 3$ then $b(u,v)=2d(k-1)-4 \geq 2(d-1)(k-1) =t(d,k)$.  
	
	\noindent\textbf{Case 3 [$d_{  \mathbb{H}(d,k)}(u, v)\geq 3$]:} Since no pair of vertices at distance $3$ or greater can have any shared neighbours, we have $b(u,v)= 2d(k-1)>t(d,k)$. 
	
	Thus by Lemma \ref{lowerbound} we have $\tww(  \mathbb{H}(d,k)) \geq t(d,k)$ as claimed.  \end{proof}

\subsection{Tensor Product}\label{sec:tensor}

The \emph{tensor product} $ {G} \times {H}$ (also called the direct
product, Kronecker product, weak product, or conjunction) has vertex set $V(G) \times V(H)$ and
\begin{equation*} 
	(u,i)(v,j) \in E( {G} \times {H})\quad  \text{if and only if}\quad [ij\in E(H)]\text{ and }[uv\in E(G)].
\end{equation*}

\begin{restatable}{theorem}{tensor}\label{thm:tensor} For any graphs $G$ and $H$ we have 
	\begin{equation*}\tww\left({G}\times {H}\right)\leq \max\big\{\tww(G)\cdot \Delta(H) + \Delta(H), \;\tww(H)\big\}  + \Delta(H) .\end{equation*} 
\end{restatable}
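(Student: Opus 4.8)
The plan is to mirror the proof of Theorem~\ref{thm:carti}. Fix a $\tww(G)$-sequence $G = G_n, \dots, G_1$, label $V(H)$ by $[m]$, and for each $i \in [m]$ call $\{(v,i) : v \in V(G)\}$ the \emph{$i$-th copy of $G$}. We contract $G \times H$ by replaying the sequence $G_n,\dots,G_1$ in every copy in lockstep: perform the first contraction of the sequence in copy $1$, then in copy $2$, \dots, then in copy $m$; then the second contraction in copy $1$, \dots, and so on, until every copy has been reduced to a single vertex. (If $G$ has no edges then $G \times H$ is edgeless and $\tww(G\times H)=0$, so assume $G$ has an edge.) Once all copies are singletons the trigraph obtained is a trigraph over $H$: after fully contracting copies $i$ and $j$ that pair is joined by an edge iff $ij \in E(H)$, and the edge is red since $V(G)\times V(G)$ contains both an edge and a non-edge (the diagonal). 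Completing this final trigraph via Lemma~\ref{lem:degreetww} contributes $\tww(H) + \Delta(H)$ to the red degree.

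The crux is the red degree during the body of the sequence, and here the tensor product behaves quite differently from the Cartesian one: a copy of $G$ inside $G \times H$ is \emph{edgeless}, so no red edges ever lie within a copy; instead all edges, hence all red edges, run between pairs of copies $i,j$ with $ij \in E(H)$, and between such a pair the trigraph mirrors the adjacency of the current $G_s$. I would use the standard invariant of contraction sequences: in $G_s$ the vertices $v$ and $x$ (corresponding to blocks $S_v, S_x \subseteq V(G)$) are joined by a red edge exactly when $S_v \times S_x$ contains both an edge and a non-edge of $G$, by a black edge when $S_v \times S_x$ is all edges, and are non-adjacent otherwise; in particular $G_s$ has red degree at most $\tww(G)$. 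Because the lockstep ordering keeps all copies within one contraction of each other, at any point in the body every copy sits at one of two consecutive stages $G_s$ or $G_{s+1}$. For a vertex $(v,i)$ and a copy $j$ with $ij\in E(H)$, the number of red edges from $(v,i)$ into copy $j$ is then bounded by the red degree of the appropriate block in $G_s$ or $G_{s+1}$, at most $\tww(G)$, plus a correction of at most $2$ that can appear only when one of the two ends sits at the coarser stage and the two blocks $A,B$ merged by the pending $(s{+}1\to s)$ contraction are still separate in the other copy. Summing over the at most $\Delta(H)$ copies adjacent to $i$ gives red degree at most $\tww(G)\Delta(H) + 2\Delta(H)$ throughout.

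Combining the two parts, $G \times H$ admits a $\max\{\tww(G)\Delta(H) + 2\Delta(H),\; \tww(H) + \Delta(H)\}$-sequence, and since $\max\{a+2c, b+c\} = \max\{a+c,b\} + c$ this equals $\max\{\tww(G)\Delta(H) + \Delta(H),\; \tww(H)\} + \Delta(H)$, as required. The main obstacle I anticipate is the bookkeeping in the second paragraph: pinning down that the ``$+2$'' correction is genuinely the worst case (it affects only the single just-merged vertex of a copy, together with the two not-yet-merged blocks $A,B$ in neighbouring copies) and that a contraction performed in one copy cannot inflate the red degree of a vertex in a neighbouring copy beyond this. I would deal with this by bounding the red degree of \emph{every} vertex of \emph{every} intermediate trigraph directly, splitting into cases according to whether a vertex's copy currently sits at stage $G_s$ or $G_{s+1}$, rather than tracking how red degrees evolve contraction by contraction.
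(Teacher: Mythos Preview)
Your proposal is correct and is essentially the paper's own proof: the same lockstep replay of a $\tww(G)$-sequence across the copies, the same observation that copies are edgeless so all red edges run between $H$-adjacent copies, and the same $(\tww(G)+2)\Delta(H)$ bound from $\tww(G)$ cross-copy red edges plus at most two extras to the not-yet-merged pair in a lagging copy, followed by Lemma~\ref{lem:degreetww} on the resulting trigraph over $H$. The paper's write-up only explicitly bounds the red degree of the freshly contracted vertex, whereas you plan to check every vertex at every stage; this is more careful but not a different argument.
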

\begin{proof}Given a $\tww(G)$-sequence $G_n, \dots, G_1$ of contractions for $G$ the first stage is to apply one contraction at a time in each copy (as we did in the proof of Theorem \ref{thm:carti}), which leaves a trigraph over $H$. The second stage, as in Theorem \ref{thm:carti}, is to reduce this trigraph using a $\tww(H)$-sequence for $H$. 
	
	To begin observe that there are no red edges of the form $(v,i)(u,i)$ created in the first stage. This follows since there are no edges of the form $(u,i)(v,i)$ in $E({G}\times{H})$, we only perform contractions between two vertices $(u,i)$ and $(v,i)$, and edges of this form cannot be created from contractions. We claim that the largest red degree in the first stage is at most $(\tww(G)+2)\cdot \Delta(H)$. First consider the red degree of a vertex $(w,i)$ directly after a contraction of $(u,i)$ and $(v,i)$ into $(w,i)$. After the equivalent contraction in $G$ the vertex $w$ has at red degree at most $\tww(G)$, and all these edges are of the form $wz$ where $z\notin \{u,v\}$, thus these edges correspond to $\tww(G)\cdot\Delta(H)$ red edges of the form $(w,i)(z,j)$. However there may be up to $2\Delta(H)$ additional other red edges of the form $(w,i)(u,j)$ and $(w,i)(v,j)$ created by the contraction of $(u,i)$ and $(v,i)$. This occurs if there was an edge $uv\in E(G)$ and the pair $u,v$ we merged in $i$-th  copy before they were merged in some other copy $j$, where $ij\in E(H)$. Thus there is a $(\tww(G)+2)\cdot\Delta(H)$-sequence transforming  ${G}\times{H}$ into a trigraph over $H$. This trigraph admits a $\tww(H)+\Delta(H)$-sequence by Lemma \ref{lem:degreetww}, thus $\tww(\cart{G}{H}) \leq \max\{(\tww(G)+2)\cdot\Delta(H), \;\tww(H) + \Delta(H)  \} $ as claimed. \end{proof}

For a graph $G$ we let $\overline{G}$ denote its complement and note that for any graph $G$ we have $\tww(G)=\tww(\overline{G})$. For the special case of cliques, observe that $\overline{K_n\square K_m}=K_n\times K_m$. Thus we obtain the following corollary to Proposition \ref{prop:cartclique}.

\begin{corollary}[Rook Complement Graphs]\label{Cor:tensorClique}For any $n,m\geq 1$ we have \begin{equation*}\tww(K_n\times K_m)=\begin{cases} 0 &\text{ if }n=m=2\text{ or } \min\{n,m\}=1\\
			2(\min\{n,m\}-1) & \text{ otherwise}
		\end{cases}.\end{equation*} 
\end{corollary}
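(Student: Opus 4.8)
The plan is to reduce the statement to Proposition~\ref{prop:cartclique} via complementation. First I would recall the stated fact that $\tww(G) = \tww(\overline{G})$ for every graph $G$; this is noted in the excerpt just before the corollary and is a standard property (complementing a graph merely swaps black edges and non-edges, while red edges are unchanged, so any $d$-sequence for $G$ is simultaneously a $d$-sequence for $\overline{G}$). Thus it suffices to identify the complement of $K_n \times K_m$ with a graph whose twin-width we already know.

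Next I would verify the identity $\overline{K_n \times K_m} \cong K_n \square K_m$, which the excerpt phrases as $\overline{K_n\square K_m} = K_n \times K_m$. Both graphs have vertex set $V(K_n)\times V(K_m)$, so the claim is about edges. In $K_n \times K_m$ we have $(u,i)(v,j) \in E$ iff $uv \in E(K_n)$ and $ij \in E(K_m)$, i.e.\ iff $u \neq v$ and $i \neq j$. Complementing, $(u,i)(v,j)$ is an edge of $\overline{K_n \times K_m}$ exactly when $(u,i)\neq(v,j)$ and it is \emph{not} the case that both $u\neq v$ and $i \neq j$; that is, when $(u,i)\neq(v,j)$ and ($u = v$ or $i = j$). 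Since $(u,i)\neq(v,j)$ together with $u=v$ forces $i\neq j$ (and symmetrically), this condition is exactly $[u=v$ and $i\neq j]$ or $[i=j$ and $u\neq v]$, which — using $E(K_n)$ being all pairs of distinct vertices — is precisely the edge relation of $\cart{K_n}{K_m}$. Hence $\overline{K_n\times K_m}\cong \cart{K_n}{K_m}$.

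Combining these two observations gives $\tww(K_n \times K_m) = \tww\big(\overline{K_n \times K_m}\big) = \tww(\cart{K_n}{K_m})$, and then Proposition~\ref{prop:cartclique} supplies the value on the right-hand side, which is exactly the piecewise formula in the corollary. I do not expect any genuine obstacle here: the only thing to be careful about is the bookkeeping in the complementation of the edge relation (making sure the degenerate overlaps $u=v,i=j$ are excluded correctly, which is handled automatically by requiring the two vertices to be distinct), and confirming that the cited equality $\tww(G)=\tww(\overline G)$ is being invoked legitimately rather than re-proved. So the corollary is essentially immediate once Proposition~\ref{prop:cartclique} is in hand.
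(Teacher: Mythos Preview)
Your proposal is correct and follows exactly the paper's approach: the paper derives the corollary by noting $\tww(G)=\tww(\overline{G})$ and $\overline{K_n\square K_m}=K_n\times K_m$, then invoking Proposition~\ref{prop:cartclique}. You have simply supplied the routine verification of the complement identity that the paper leaves implicit.
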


For integers $d,k\geq 1 $, let $ \mathbb{T}(d,k)$ be the \emph{weak power of a clique}, defined inductively by \begin{equation*} \mathbb{T}(d,k) = K_k \times  \mathbb{T}(d-1,k)\qquad \text{and} \qquad \mathbb{T}(1,k) = K_k .\end{equation*}
Weak powers of cliques have been studied in the contexts of colourings \& independent sets \cite{MR357199,MR2105948,MR2368031} and isoperimetric inequalities \cite{Brakensiek17}. Using Theorem \ref{thm:tensor} we can determine the twin-width for these graphs.

\begin{proposition}[Weak Powers of Cliques]\label{prop:tensorhamming}For any $d,k\geq 1$ we have  \begin{equation*}\tww(  \mathbb{T}(d,k))= \begin{cases} 0 &\text{ if } d=1 \text{ or }k\leq 2\\ 
			2(k-1)^{d-1} & \text{ if } d\geq 2 \text{ and } k\geq 3
		\end{cases}.\end{equation*}
\end{proposition}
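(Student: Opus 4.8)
The plan is to mirror the structure of Proposition~\ref{prop:hamming}: obtain the upper bound by iterating Theorem~\ref{thm:tensor}, and match it from below with Lemma~\ref{lowerbound}. The degenerate cases are immediate: $\mathbb{T}(1,k)=K_k$ has twin-width $0$, $\mathbb{T}(d,1)$ is the one-vertex graph $K_1$, and $\mathbb{T}(d,2)$ is $1$-regular (a tensor product of $1$-regular graphs), hence a disjoint union of edges and so a cograph of twin-width $0$. So assume $d\ge 2$ and $k\ge 3$ from now on.

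For the upper bound, first note that $\mathbb{T}(j,k)$ is $(k-1)^{j}$-regular for every $j\ge 1$ (by induction, since the tensor product of an $a$-regular graph and a $b$-regular graph is $ab$-regular), so $\Delta(\mathbb{T}(d-1,k))=(k-1)^{d-1}$. Applying Theorem~\ref{thm:tensor} to $\mathbb{T}(d,k)=K_k\times\mathbb{T}(d-1,k)$ and using $\tww(K_k)=0$ yields \[ \tww(\mathbb{T}(d,k))\ \le\ \max\bigl\{(k-1)^{d-1},\ \tww(\mathbb{T}(d-1,k))\bigr\}+(k-1)^{d-1}. \] I would then induct on $d$. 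The base case $d=2$ is $\tww(\mathbb{T}(2,k))=\tww(K_k\times K_k)=2(k-1)$ by Corollary~\ref{Cor:tensorClique}. For the inductive step, since $k\ge 3$ gives $2(k-1)^{d-2}\le(k-1)^{d-1}$, the hypothesis $\tww(\mathbb{T}(d-1,k))\le 2(k-1)^{d-2}$ makes the maximum above equal to $(k-1)^{d-1}$, and we conclude $\tww(\mathbb{T}(d,k))\le 2(k-1)^{d-1}$.

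For the lower bound I would use the string model: identify each vertex of $\mathbb{T}(d,k)$ with a string in $\{0,\dots,k-1\}^{d}$, two strings being adjacent exactly when they differ in \emph{every} coordinate (this follows from the definition of the tensor product by induction on $d$); the graph is $(k-1)^{d}$-regular. If $x\neq y$ differ in exactly $t\ge 1$ coordinates, then $z\in N(x)\cap N(y)$ must avoid $x_i$ at each of the $d-t$ agreeing coordinates and avoid both $x_i$ and $y_i$ at each of the $t$ differing coordinates, so $|N(x)\cap N(y)|=(k-1)^{d-t}(k-2)^{t}$ and hence $|N(x)\triangle N(y)|=2(k-1)^{d}-2(k-1)^{d-t}(k-2)^{t}$. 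This is increasing in $t$, so it is minimised at $t=1$, where it equals $2(k-1)^{d-1}$; moreover $x,y\notin N(x)\triangle N(y)$ whenever $t<d$, so for such pairs the quantity in Lemma~\ref{lowerbound} is already at least $2(k-1)^{d-1}$. The only remaining case $t=d$ (where $x,y$ are adjacent and $\{x,y\}\subseteq N(x)\triangle N(y)$) reduces to the elementary inequality $2(k-1)^{d}-2(k-2)^{d}-2\ge 2(k-1)^{d-1}$, equivalently $(k-2)\bigl((k-1)^{d-1}-(k-2)^{d-1}\bigr)\ge 1$, which holds for all $k\ge 3$ and $d\ge 2$. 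Hence Lemma~\ref{lowerbound} gives $\tww(\mathbb{T}(d,k))\ge 2(k-1)^{d-1}$, matching the upper bound.

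I expect the obstacles to be bookkeeping rather than conceptual. The one genuinely non-automatic point is that the clean recursion in the upper bound requires invoking Theorem~\ref{thm:tensor} with the twin-width-$0$ factor $K_k$ in the role of $G$ (not $H$), since that theorem is not symmetric in its two arguments; the opposite choice produces an extra $+2(k-1)$ at every level and fails to close the induction. The only calculation needing a moment's care is the $t=d$ sub-case of the lower bound, and even that is a one-line inequality.
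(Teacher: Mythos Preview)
Your proposal is correct and follows essentially the same approach as the paper: iterate Theorem~\ref{thm:tensor} with $K_k$ in the role of $G$ for the upper bound (base case from Corollary~\ref{Cor:tensorClique}), and use the string model with Lemma~\ref{lowerbound} for the lower bound. Your handling of the $t=d$ subcase is in fact more explicit than the paper's, which asserts the corresponding inequality without isolating it.
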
	

\begin{proof} The following observation holds by the definition of tensor product: $\mathbb{T}(d,k)$ is the graph on vertex set $[k]^d$ where two vertices $u, v \in [k]^d$ are connected if and only if $u_i \neq  v_i$ for all $i \in [k]$.
	
	To begin, if $ k=1$ or $d=1$ then the graph is either a set of isolated vertices or a single clique, respectively, so the result holds. For the case $k=2$ we claim that for any $d\geq 1$ the graph $\mathbb{T}(d,2)$ is a complete matching; thus $\tww(\mathbb{T}(d,2))=0$, in agreement with the statement. To see this note that, by the observation above, any vertex $x=(x_1, \dots x_d)\in V(\mathbb{T}(d,2))= \{0,1\}^d $ only has one neighbour given by  $\bar{x}=(1-x_1, \dots 1-x_d) $. 
	
	Thus from now on we can assume that $k\geq 3$ and $d\geq 2$.  
	
	We now prove the upper bound $ \tww(  \mathbb{T}(d,k))\leq 	2(k-1)^{d-1}$ by induction in $d$. Observe that for any $k\geq 3$ and $d=2$ we have $\tww(  \mathbb{T}(2,k)) =	2(k-1)$ by Corollary \ref{Cor:tensorClique}, which establishes the base case. For the inductive step observe that $\mathbb{T}(d,k) $ is a regular graph of degree $(k-1)^d$ since, for any fixed vertex $x$, each coordinate of of a neighbouring vertex $y\in [k]^d$ can take one of $k-1$ values which differ from the corresponding coordinate of $x$. Recall also that $\tww(K_k)=0$, thus by Theorem \ref{thm:tensor}, for any $k\geq 3$, we have 
	\begin{equation*}	\begin{aligned}\tww(  \mathbb{T}(d,k))&\leq  \max\{\tww(K_k)\cdot \Delta(\mathbb{T}(d-1,k)) +\Delta(\mathbb{T}(d-1,k)), \tww(  \mathbb{T}(d-1,k)) \} + \Delta(\mathbb{T}(d-1,k)) \\
			&\leq  \max\{0\cdot (k-1)^{d-1} +(k-1)^{d-1},\; 2(k-1)^{d-2} \} + (k-1)^{d-1}\\
			& = 2(k-1)^{d-1}.  \end{aligned}
	\end{equation*}

	For the lower bound, observe that any two vertices $u,v\in V(\mathbb{T}(d,k)))$, where  $u\neq v$, must differ at $i\in[d]$ coordinates. Thus for such a pair we have $N(u)\cap N(v) = (k-2)^i(k-1)^{d-i} $ as for a vertex $w$ to be adjacent to both $u$ and $v$ it must differ from both at all coordinates, so there are $k-2$ options for each of the coordinates where $u$ and $v$ differ and $k-1$ options for coordinates which are shared. Now, since $uv\in E(\mathbb{T}(d,k))$ if and only if $i=d$, it follows that for any $u\neq v\in V(\mathbb{T}(d,k)))$ we have   \begin{equation*}\begin{aligned} |(N_{  \mathbb{T}(d,k)}(u)\triangle N_{  \mathbb{T}(d,k)}(v))\backslash\{u,v\}\Big| & \geq 2\left((k-1)^d  - (k-2)^i(k-1)^{d-i}- \mathbf{1}(i=d)\right) \\ &\geq 2\left((k-1)^d  - (k-2)(k-1)^{d-1} \right)  \\ &=   2(k-1)^{d-1}.\end{aligned}\end{equation*} Thus by  Lemma \ref{lowerbound} we have $\tww(  \mathbb{T}(d,k))\geq 2(k-1)^{d-1} $.  
\end{proof}

Although the previous result shows that Theorem \ref{thm:tensor} is tight, one of the graphs in the tensor product has twin-width zero and so it is not clear the $\tww(G)\cdot \Delta(H)$ term in Theorem \ref{thm:tensor} is needed. However, by a very similar proof to that of Proposition \ref{prop:stronghypercubes}, one can show that for any $c,d\geq 2$ the twin-width of the tensor product of $c$ and $d$ dimensional hypercubes satisfies  \begin{equation*}2cd-4\cdot \max\{c,d\}\leq \tww(  \mathbb{H}(c,2)\times   \mathbb{H}(d,2)) \leq 2cd -2\cdot \max\{c,d\}.\end{equation*} This shows that the $\tww(G)\cdot \Delta(H)$ term in Theorem \ref{thm:tensor} is necessary.  

The following result shows that if one of the graphs in the tensor product is a star graph, where $K_{1,n}$ denotes a star on $n+1$ vertices, then we do not incur the full cost of the max degree term. 

\begin{restatable}[Tensor with a Star]{proposition}{tensorstar}\label{Prop:tensorstar}Let $G$ be any graph, then for any integer $n\geq 1$ 
	\begin{equation*}\tww(G\times K_{1,n})\leq \tww(G\times K_2) \leq \tww(G) + 2.\end{equation*} 
\end{restatable}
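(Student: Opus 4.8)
The plan is to handle the two inequalities separately. The second inequality, $\tww(G\times K_2)\leq \tww(G)+2$, should follow directly from the general tensor bound in Theorem~\ref{thm:tensor}: with $H=K_2$ we have $\Delta(H)=1$ and $\tww(H)=0$, so the bound reads $\max\{\tww(G)\cdot 1+1,\;0\}+1=\tww(G)+2$. So the only real content is the first inequality $\tww(G\times K_{1,n})\leq \tww(G\times K_2)$.

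For the first inequality, I would exploit the structure of $G\times K_{1,n}$. Write $V(K_{1,n})=\{c\}\cup\{1,\dots,n\}$ with $c$ the centre. Then $G\times K_{1,n}$ has vertex set $V(G)\times(\{c\}\cup[n])$, and the edges are: $(u,c)(v,k)$ whenever $uv\in E(G)$ and $k\in[n]$, together with $(u,k)(v,c)$ — i.e. every edge of $G\times K_{1,n}$ joins a ``centre-copy'' vertex to a ``leaf-copy'' vertex, and the $n$ leaf copies of $G$ are pairwise non-adjacent and each leaf copy relates to the centre copy exactly as in $G\times K_2$. So $G\times K_{1,n}$ is the graph obtained from $G\times K_2$ (with parts ``centre copy'' $C$ and ``leaf copy'' $L$) by creating $n-1$ additional disjoint twin-copies of the part $L$, each attached to $C$ in the identical way. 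The first step is to record this observation precisely.

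Next I would contract $G\times K_{1,n}$ down to $G\times K_2$ by repeatedly merging the leaf copies. Concretely: for each $v\in V(G)$, contract the $n$ vertices $(v,1),\dots,(v,n)$ into a single vertex, doing this vertex-by-vertex over $v\in V(G)$ in an arbitrary order. Because all leaf copies have exactly the same neighbourhood into $C$ (namely $(v,k)$ has neighbourhood $\{(w,c):uw\in E(G)\}$ independent of $k$) and the leaf copies are pairwise non-adjacent, each such contraction of true twins introduces no red edges at all: $N((v,j))\triangle N((v,k))=\varnothing$, and this remains true as we proceed, since we only ever merge vertices that are still true twins in the current trigraph. The key point to check is that after partially collapsing some leaf copies, the merged leaf vertices are still twins of the unmerged ones with respect to $C$ — which holds because merging true twins with identical neighbourhoods yields a vertex with that same neighbourhood. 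So this entire phase is a $0$-sequence-style reduction producing exactly (a graph isomorphic to) $G\times K_2$, with no red edges. Then prepend an optimal $\tww(G\times K_2)$-sequence of $G\times K_2$ to finish; the red degree never exceeds $\tww(G\times K_2)$, giving $\tww(G\times K_{1,n})\leq\tww(G\times K_2)$.

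The main obstacle — really the only thing requiring care — is verifying that the twin relation among the leaf copies is preserved throughout the collapsing phase, and in particular that no vertex of $C$ ever acquires a red edge during it. This needs the explicit description of how contraction updates edges (every vertex in $N(u)\cap N(v)$ gets a black edge to the merged vertex when it was black to both): since $N((v,j))=N((v,k))$ with all these edges black, the merge is clean. I would state this as a short claim with a one-line inductive justification. One should also double-check the degenerate case $n=1$ (where the statement is trivial) and note $\Delta(K_{1,n})=n$ so the general Theorem~\ref{thm:tensor} bound would have given only $\tww(G)\cdot n + 2n$, which is exactly the weakness this proposition removes.
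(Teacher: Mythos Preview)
Your proposal is correct and is essentially the same argument as the paper's: both proofs observe that for any leaf $i$ of the star and any $v\in V(G)$ the neighbourhood of $(v,i)$ is $\{(w,c):vw\in E(G)\}$, independent of $i$, so the leaf-copy vertices are true twins and can be contracted down to a single leaf copy without creating any red edges, leaving exactly $G\times K_2$; the second inequality is then read off from Theorem~\ref{thm:tensor}. The only cosmetic difference is the contraction order (you collapse all leaves at a fixed $v$ before moving on, the paper merges two leaf layers at a time across all $v$), which is immaterial since every contraction is between true twins.
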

\begin{proof}
	Let $s$ be the centre of the star $K_{1,n}$. We do the following series of contractions to reduce $G\times K_{1,n}$ to $G\times K_2$. In each phase we choose a pair of vertices $i,j\in V(K_{1,n})\backslash \{s\}$ and then one by one we contract pairs $(u,i)$ and $(u,j)$ into a single vertex $(u,k)$ until there are no vertices $(u,i) $ or $(u,j)$ for any $u \in V(G)$ remaining. We then start the next phase, continuing like this until there are is only one vertex $i \neq s$ remaining. At this point we are left with a trigraph over $G\times K_2$. 
	We claim that during this sequence of contractions no red edges are created, the final trigraph has no red edges.
	
	To see why this holds observe that by the definition of the tensor product, for any vertex $(u,i)$ we have $N(u,i)= \{(v,s): uv \in E(G)  \}$ as $K_{1,n}$ is a star. It follows that for any pair $i,j\neq s$ of distinct leaves in $S_n$ and any $u\in V(G)$ we have $N(u,i)=N(u,j)$. Thus the pair $(u,i), (u,j)$ are twins and can be contracted without creating red edges.  Finally the inequality $ \tww(G\times K_2) \leq \tww(G) + 2$ holds by Theorem \ref{thm:tensor}.
\end{proof}

\subsection{Strong Product}

The \emph{strong product} $G \boxtimes H$ has vertex set $V(G) \times V(H)$ and $E(G \boxtimes H) = E(\cart{G}{H}) \cup E(G \times H) $, thus \begin{equation*}  \begin{aligned} & (u,v)(u',v') \in E(G \boxtimes H)\quad  \text{if and only if} \\ & \quad \qquad [u = u' \text{ and }vv' \in E(H)] \text{ or }[v=v'\text{ and }uu' \in E(G)]\text{ or }[uu' \in E(G)\text{ and }vv' \in E(H)].\end{aligned}\end{equation*} The following bound on the twin-width of a strong product is proved in~\cite{TWII}.
\begin{theorem}[{\cite[Theorem 2.7]{TWII}}]\label{thm:bonnetstrong}
	Let $G$ and $H$ be two graphs.
	Then \begin{equation*}\tww(G \boxtimes H) \leqslant \max\left\{\tww(G)(\Delta(H)+1)+\Delta(H),\; \tww(H) \right\} +\Delta(H).\end{equation*}
\end{theorem}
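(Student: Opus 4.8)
The plan is to mimic the two–stage strategy used for the Cartesian and tensor products (Theorems \ref{thm:carti} and \ref{thm:tensor}). Fix a $\tww(G)$-sequence $G_n,\dots,G_1$ for $G$ and label $V(H)=[m]$; as before, call the subgraph of $G\boxtimes H$ induced by $\{(v,i):v\in V(G)\}$ the $i$-th copy of $G$, and keep using this name after contractions. In the first stage we run $G_n,\dots,G_1$ inside every copy, one contraction at a time, cycling through the copies $1,\dots,m$ (perform the step $G_\ell\to G_{\ell-1}$ in copy $1$, then copy $2$, \dots, then copy $m$, then move on to $G_{\ell-1}\to G_{\ell-2}$, and so on), until we are left with a trigraph over $K_1\boxtimes H\cong H$. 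In the second stage we finish this trigraph off with a $(\tww(H)+\Delta(H))$-sequence, which exists by Lemma \ref{lem:degreetww}. It remains to control the red degree during the first stage.

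The key structural observation is the adjacency rule for the strong product after partial contractions. For $i\neq j$, if $ij\notin E(H)$ then there is no edge between copy $i$ and copy $j$ at all; if $ij\in E(H)$ and a vertex of copy $i$ represents the set $P\subseteq V(G)$ while a vertex of copy $j$ represents $Q\subseteq V(G)$, then $(P,i)(Q,j)$ is an edge precisely when $Q\cap N_G[P]\neq\emptyset$, and it is \emph{black} precisely when every vertex of $Q$ lies in the closed neighbourhood of every vertex of $P$. Moreover, inside a single copy the product is literally a copy of $G$, so contractions there are just contractions in $G$. Now fix a vertex $(P,i)$ of the current trigraph and bound its red degree in three parts. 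First, the red edges of $(P,i)$ inside copy $i$ number at most $\tww(G)$, since copy $i$ is at one of $G_n,\dots,G_1$. Second, by the rule above the only copies to which $(P,i)$ has any edges are the at most $\Delta(H)$ copies $j$ with $ij\in E(H)$. Third — the main point — $(P,i)$ has at most $\tww(G)+2$ red edges into any one such copy $j$: the cycling order keeps copies $i$ and $j$ within a single elementary contraction of one another, and a short case analysis on whether $P$ is the part just formed by merging two parts $A,B$ shows that the parts $Q$ of copy $j$ for which $(P,i)(Q,j)$ is red are either $A$ or $B$ themselves (at most two), or red neighbours of the corresponding part in $G$'s contraction sequence (at most $\tww(G)$, since every trigraph of a $\tww(G)$-sequence has red degree at most $\tww(G)$, and every "asymmetric" or red $G$-neighbour of a half becomes a red $G$-neighbour of the merged part); a black neighbourhood relation inside the $G$-trigraph always produces a black edge in the product. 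Summing the three parts, $(P,i)$ has red degree at most $\tww(G)+\Delta(H)\bigl(\tww(G)+2\bigr)=\tww(G)(\Delta(H)+1)+2\Delta(H)$.

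Once the first stage ends we have a trigraph over $H$ whose red degree is at most $\Delta(H)$ (between copies $i$ and $j$ with $ij\in E(H)$ there is an edge, red exactly when $G$ is not complete), which is below the quantity above, so Lemma \ref{lem:degreetww} yields a $(\tww(H)+\Delta(H))$-sequence finishing the job. Hence $G\boxtimes H$ admits a $d$-sequence with $d=\max\{\tww(G)(\Delta(H)+1)+2\Delta(H),\ \tww(H)+\Delta(H)\}=\max\{\tww(G)(\Delta(H)+1)+\Delta(H),\ \tww(H)\}+\Delta(H)$, the claimed bound.

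The step I expect to be the main obstacle is exactly the third red-degree bound: a merged part $A\cup B$ of one copy can be $G$-adjacent to many parts of another copy, so the naive count "all neighbours of $P$ in the other copy" is far too large. The point is to peel off the at most two old halves $A,B$ and to argue that every other part of copy $j$ incident to $(P,i)$ via a red edge is forced to be a red neighbour of $A\cup B$ in the very next trigraph of the $\tww(G)$-sequence, of which there are at most $\tww(G)$. The rest — verifying the closed-neighbourhood adjacency rule, and checking the boundary "aligned" configurations where all copies are at the same $G_\ell$ — is routine bookkeeping.
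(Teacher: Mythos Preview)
The paper does not contain its own proof of this theorem; it is quoted verbatim from \cite{TWII} and only used as a black box (to feed into Proposition~\ref{prop:stronghypercubes}). So there is no ``paper's proof'' to compare against here.

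That said, your argument is correct and is exactly the two-stage template the paper runs for Theorems~\ref{thm:carti} and~\ref{thm:tensor} (and, in fact, the template of \cite{TWII} itself): cycle the $\tww(G)$-sequence through the $m$ copies one elementary contraction at a time, then finish the resulting trigraph over $H$ via Lemma~\ref{lem:degreetww}. Your count $\tww(G)+\Delta(H)\cdot(\tww(G)+2)$ for the first stage is right. One small point your sketch leaves implicit: in the ``aligned'' case (both copies at the same $G_\ell$) the cross-copy red neighbours of $(P,i)$ in copy $j$ are the red $G_\ell$-neighbours of $P$ \emph{together with the diagonal part $(P,j)$}, the latter being red whenever $P$ does not induce a clique in $G$. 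That diagonal edge is the analogue of the ``$A$ or $B$ themselves'' contribution you isolate in the misaligned case, so the per-copy bound is still $\tww(G)+2$; it is worth saying so explicitly, since the closed-neighbourhood rule is what makes the diagonal appear for $\boxtimes$ but not for $\times$. The rest of your bookkeeping (the final trigraph over $H$ having red degree $\le\Delta(H)$, and the algebraic rewriting of the maximum) is fine.
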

Unlike for tensor products this bound is not tight for strong products of complete graphs as $K_n\boxtimes K_m \cong K_{n\cdot m}$, and thus $\tww(K_n\boxtimes K_m)=0  $, for any $n,m\geq 1$. However the following result shows that Theorem \ref{thm:bonnetstrong} is tight for the strong product of two hypercubes.

\begin{proposition}[Strong Product of Hypercubes]\label{prop:stronghypercubes}For any $c,d\geq 2$ we have  \begin{equation*}\tww(  \mathbb{H}(c,2)\boxtimes   \mathbb{H}(d,2)) = 2cd +2\cdot \min\{c-d,\;d-c\} -4.\end{equation*}
\end{proposition}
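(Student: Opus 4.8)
The plan is to prove a matching upper and lower bound. First observe that the strong product of hypercubes is itself a Hamming-type graph: $\mathbb{H}(c,2)\boxtimes\mathbb{H}(d,2)$ has vertex set $\{0,1\}^c\times\{0,1\}^d=\{0,1\}^{c+d}$, and two vertices are adjacent precisely when they differ in at least one and at most\dots\ wait, more carefully: by the definition of the strong product, $(u,i)$ and $(v,j)$ are adjacent iff ($u=v$ or $uv\in E(\mathbb{H}(c,2))$, i.e.\ $d_H(u,v)\le 1$) and ($i=j$ or $ij\in E(\mathbb{H}(d,2))$, i.e.\ $d_H(i,j)\le1$), and not both equalities. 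So two vertices are adjacent iff they differ in at most one of the first $c$ coordinates AND at most one of the last $d$ coordinates, but are not identical. This is the key combinatorial description I would record first.

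For the upper bound, I would apply Theorem~\ref{thm:bonnetstrong} (the bound of \cite{TWII}) with $G=\mathbb{H}(c,2)$ and $H=\mathbb{H}(d,2)$, using WLOG $c\ge d$ so that we put the larger cube as $G$. Since $\Delta(\mathbb{H}(d,2))=d$ and, by Proposition~\ref{prop:hamming}, $\tww(\mathbb{H}(c,2))=2(c-2)$ for $c\ge2$ and $\tww(\mathbb{H}(d,2))=2(d-2)$, the bound gives $\tww\le\max\{2(c-2)(d+1)+d,\;2(d-2)\}+d=2(c-2)(d+1)+2d$. But this is far larger than the claimed $2cd+2(d-c)-4=4d-4$ when $c\ge d$\dots\ so Theorem~\ref{thm:bonnetstrong} applied blindly is \emph{not} tight here, which contradicts the surrounding text that says it is. Let me reconsider: perhaps one should instead exploit that $\tww(K_2)=0$ and build the contraction sequence coordinate-by-coordinate the way Proposition~\ref{prop:hamming} does, treating $\mathbb{H}(c,2)\boxtimes\mathbb{H}(d,2)$ as an iterated product and peeling off one binary coordinate at a time — each step a strong-product-with-$K_2$ step whose cost is controlled by the current max degree. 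I would therefore set up an inductive scheme: contract along the $C_4$-like structure within one coordinate, reducing to a graph on $\{0,1\}^{c+d-1}$, and track the red degree, which is governed by how many ``partner'' coordinates create red edges; the max degree of the ambient graph is $(c+1)(d+1)-1$, and the analysis should yield the stated bound $2cd+2\min\{c-d,d-c\}-4$.

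For the lower bound, I would use Lemma~\ref{lowerbound}: bound $\tww$ below by $\min_{u\ne v} b(u,v)$ where $b(u,v)=|(N(u)\triangle N(v))\setminus\{u,v\}|$. Using the combinatorial description above, I would compute $|N(u)\cap N(v)|$ as a function of the ``type'' of the pair $(u,v)$, namely how many of the first $c$ coordinates they differ in and how many of the last $d$ they differ in. The minimizing pair should be one differing in exactly one coordinate of the smaller block and zero (or one) of the larger, and a direct count of common neighbours — neighbours must agree with both $u$ and $v$ off at most one first-coordinate and at most one last-coordinate — gives the size of the intersection; subtracting twice this from $2\deg$ (noting whether $uv$ is an edge) yields exactly $2cd+2\min\{c-d,d-c\}-4$. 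The main obstacle will be the upper bound: getting the red-degree bookkeeping in the coordinate-by-coordinate contraction to come out at the sharp value $2cd+2\min\{c-d,d-c\}-4$ rather than the weaker bound Theorem~\ref{thm:bonnetstrong} hands us — this presumably requires a careful ordering of contractions (doing all of one coordinate across the whole graph before moving on, as in Proposition~\ref{prop:hamming}) together with a precise count of how red edges propagate between the two ``sides'' of the product, and matching the parameter $\min\{c-d,d-c\}$ correction term.
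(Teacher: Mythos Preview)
Your lower-bound plan is fine and is exactly what the paper does: apply Lemma~\ref{lowerbound}, case-split on the pair according to how many coordinates they differ in on each side, and compute $|N(u)\cap N(v)|$ (the paper packages this via the closed-neighbourhood identity $\overline{N}_{G\boxtimes H}((u,i))=\overline{N}_G(u)\times\overline{N}_H(i)$ and a short claim that $|\overline{N}_d(u)\cap\overline{N}_d(v)|\le 2$ in a hypercube).

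The upper bound, however, contains a genuine slip. Theorem~\ref{thm:bonnetstrong} \emph{is} tight here; you simply applied it in the wrong direction. With $c\ge d$ and $G=\mathbb{H}(c,2)$, $H=\mathbb{H}(d,2)$ you correctly get
\[
(2c-4)(d+1)+2d \;=\; 2cd+2c-2d-4,
\]
which overshoots. But $\boxtimes$ is commutative, so you may also take $G=\mathbb{H}(d,2)$, $H=\mathbb{H}(c,2)$, giving
\[
(2d-4)(c+1)+2c \;=\; 2cd+2d-2c-4,
\]
and for $c\ge d$ this equals $2cd+2\min\{c-d,d-c\}-4$ on the nose. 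The paper's upper bound is exactly this two-line application of Theorem~\ref{thm:bonnetstrong} in both orders, plus the small boundary cases $c=2$ or $d=2$ handled separately. So your proposed coordinate-by-coordinate inductive contraction scheme is unnecessary (and, as stated, only a sketch). Incidentally, your line ``$2cd+2(d-c)-4=4d-4$'' is an arithmetic error; for $c\ge d$ the target is $2cd-2(c-d)-4$, which is not $4d-4$ in general.
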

\begin{proof} Denote $\mathbb{S}(c,d)= \mathbb{H}(c,2)\boxtimes   \mathbb{H}(d,2)$. We begin with the upper bound. Recall that, for any $d\geq 2$, we have $\tww(  \mathbb{H}(d,2)) = 2d-4 $ by Proposition \ref{prop:hamming} and $\Delta(  \mathbb{H}(d,2))=d$. Thus, for $c,d\geq 3$, Theorem \ref{thm:bonnetstrong} gives 
	\begin{equation*}\tww(  \mathbb{S}(c,d)) \leq  \max\big\{(2c-4)\cdot (d+1) + d, \;2d-4\big\}  +d =  (2c-4)\cdot(d+1) + 2d = 2cd +2c-2d-4, 
	\end{equation*}and as $\boxtimes $ is symmetric the bound $2cd +2d-2c-4$ also holds, as claimed. Similarly, by Theorem \ref{thm:bonnetstrong}, 
	\begin{equation*} \tww(  \mathbb{S}(2,2)) \leq  \max\big\{2, \;0\big\} + 2 =4 = 2\cdot 2\cdot 2 - 2\min\{2-2,2-2\}-4, \end{equation*}and for the final case, where w.l.o.g.\ we can assume $c=2$ and $d\geq 3$ by symmetry, we have 
	\begin{equation*} \tww(  \mathbb{S}(2,d)) \leq  \max\big\{0, \; d\big\} + d =2d = 2\cdot 2\cdot d + 2\min\{2-d,d-2\} -4,  \end{equation*} this completes the proof of the upper bound. 
	
	We now consider the lower bound which will follow by Lemma \ref{lowerbound}. Let $\overline{N}_G(v)=N_G(v)\cup \{v\}$ be the closed neighbourhood of a vertex $v\in V(G)$. It is known \cite[Exercise 4.6]{Handbook}, that for any graphs $G,H$ and vertices $u,v\in V(G)$ and $i,j\in V(H)$ we have \begin{equation*} \overline{N}_{G\boxtimes H}((u,i)) = \overline{N}_{G}(u)\times \overline{N}_{H}(i).\end{equation*} Thus, we have
	\begin{equation*}\overline{N}_{G\boxtimes H}((u,i))\cap \overline{N}_{G\boxtimes H}((v, j)) = \left(\overline{N}_{G}(u)\cap \overline{N}_{G}(v)\right)\times \left(\overline{N}_{H}(i)\cap \overline{N}_{H}(j)\right). \end{equation*}Consequently $\overline{N}_{  G\boxtimes H}((u,i))\triangle \overline{N}_{G\boxtimes H}((v,j)) $ is given by 
	\begin{equation}\label{eq:nbhdiff} \left( \left(\overline{N}_{  G}(u)\times \overline{N}_{  H}(i)\right) \cup \left(\overline{N}_{  G}(v)\times  \overline{N}_{  H}(j)\right)\right)\backslash \left( \left(\overline{N}_{G}(u)\cap \overline{N}_{G}(v)\right)\times \left(\overline{N}_{H}(i)\cap \overline{N}_{H}(j)\right)\right) .
	\end{equation}Observe also that for any $(u,i),(v,j) \in V(G\boxtimes H)$ we have \begin{equation}\label{eq:closedopen}\left(N_{G\boxtimes H}((u,i))\triangle N_{G\boxtimes H}((v,j))\right)\backslash\{(u,i),(v,j)\} = \left(\overline{N}_{G\boxtimes H}((u,i))\triangle \overline{N}_{G\boxtimes H}((v,j))\right)\backslash\{(u,i),(v,j)\}. \end{equation} 
	
	\newcommand{\cNu}{\overline{N}_{c}(u)}
	\newcommand{\cNv}{\overline{N}_{c}(v)}
	\newcommand{\dNi}{\overline{N}_{d}(i)}
	\newcommand{\dNj}{\overline{N}_{d}(j)}
	Let $B((u,i), (v,j)) = (N_{  \mathbb{S}(c,d)}((u,i))\triangle N_{  \mathbb{S}(c,d)}((v,j)))\backslash\{(u,i),(v,j)\}$ and for any integer $k\geq 1$ and denote $\overline{N}_k(u)=\overline{N}_{\mathbb{H}(k,2)}(u) $ for ease of reading. Then, by \eqref{eq:nbhdiff} and \eqref{eq:closedopen}, for any $(u,i),(v,j)$ we have
	\begin{align}\label{eq:bbdd}|B((u,i), (v,j))|&\geq \left| \cNu \times \dNi \right|  + \left| \cNv \times \dNj \right|  -  2 \left| \left( \cNu \cap \cNv \right) \times \left( \dNi \cap \dNj \right)\right|\notag \\&\qquad \qquad \qquad -|\{(u,i),(v,j)\}|\notag \\
		&= 2(c+1)(d+1)  -  2 \left| \left( \cNu \cap \cNv \right) \times \left( \dNi \cap \dNj \right)\right|-2 \notag\\
		&= 2cd +2c+2d  -  2 \left| \left( \cNu \cap \cNv \right) \times \left( \dNi \cap \dNj \right)\right| 
		. \end{align}  
	By Lemma~\ref{lowerbound}, the twin-width of $\mathbb{S}(c,d)$ is lower-bounded by the smallest value of $|B((u,i),(v,j))|$ over all these cases.  We now establish a very simple claim.  
	\begin{clm}For any $d\geq 2$ and $u,v \in V(\mathbb{H}(d,2))$ where $u\neq v$, we have $\left|\overline{N}_{d}(u) \cap \overline{N}_{d}(v)\right|  \leq 2 $ . 
	\end{clm}
	\begin{poc}
		It will help to consider $  \mathbb{H}(d,2)$ as follows: we identify each vertex as a string in $\{0,1\}^d$ where two vertices are connected if and only if their strings differ in one exactly coordinate. We have three cases: 
		
		\noindent\textbf{Case 1 [$uv\in E(  \mathbb{H}(d,2))$]:} Recall that the hypercube $\mathbb{H}(d,2)$ is bipartite, thus $u$ and $v$ cannot have any common neighbours.   
		Hence, $\overline{N}_d(u)\cap \overline{N}_d(v)= \{u,v\}$ and so $|\overline{N}_d(u)\cap \overline{N}_d(v)|=2$.  
		
		\noindent\textbf{Case 2 [$d_{  \mathbb{H}(d,2)}(u, v)=2$]:} It follows that $u$ and $v$ differ at exactly two coordinates $i$ and $j$ where $i\neq j$. Thus if $w\in V(  \mathbb{H}(d,k))$ is adjacent to both $u$ and $v$ it must be equal to $u$ at all but one coordinate and equal to $v$ at all but one coordinate, so $|\overline{N}_d(u)\cap \overline{N}_d(v)|=2$. 
		
		\noindent\textbf{Case 3 [$d_{  \mathbb{H}(d,2)}(u, v)\geq 3$]:} In this case $\overline{N}_d(u)\cap \overline{N}_d(v)=\emptyset $  thus $|\overline{N}_d(u)\cap \overline{N}_d(v)|=0\leq 2$. \end{poc}

	Having established the claim we can now check, using \eqref{eq:bbdd}, that \[|B((u,i),(v,j))|\geq 2cd +2\cdot \min\{c-d,\;d-c\} -4\] for the following three cases, which cover all possible pairs of vertices $(u,i)$ and $(v,j)$.
	
	\medskip

	\noindent\textbf{Case 1 [$u\neq v, i\neq j$]:}  Observe that in this case, by the Claim above, we have $|\cNu \cap \cNv|\leq 2$ and $|\dNi \cap \dNj | \leq 2$. Consequently, $\left| \left( \cNu \cap \cNv \right) \times \left( \dNi \cap \dNj \right)\right|\leq 4$. Thus, by \eqref{eq:bbdd} and $c,d\geq 2$,
	\[|B((u,i), (v,j))|\geq 2cd +2c+2d -  2 \cdot 4   \geq 2cd +2\cdot \min\{c-d,\;d-c\} -4. \]
	
	\noindent\textbf{Case 2 [$u=v, i \neq j$]:} In this case $\cNu =\cNv$, thus $|\cNu \cap \cNv| = c+1$. Since $i\neq j$, the Claim above gives  $|\dNi \cap \dNj | \leq 2$. Thus $\left| \left( \cNu \cap \cNv \right) \times \left( \dNi \cap \dNj \right)\right|\leq 2(c+1)$ and so by \eqref{eq:bbdd}
	\[|B((u,i), (v,j))|\geq 2cd +2c+2d -  2 \cdot 2(c+1)   \geq 2cd +2\cdot \min\{c-d,\;d-c\} -4. \]  
	\noindent\textbf{Case 3 [$u\neq v, i = j$]:} Analogously to Case 2,  $|\dNi \cap \dNj| = d+1$ and  $|\cNu \cap \cNv | \leq 2$. Thus  
	\[|B((u,i), (v,j))|\geq 2cd +2c+2d -  2 \cdot 2(d+1)   \geq 2cd +2\cdot \min\{c-d,\;d-c\} -4. \]  This establishes the lower bound by Lemma \ref{lowerbound}.  \end{proof}

\subsection{Lexicographic Product}

The lexicographic product of $G$ and $H$, written $\lex{G}{H}$, has vertex set $V(G) \times V(H)$ and for $u,v \in V(G)$ and $i,j \in V(H)$,
\begin{equation*}
	(u,i)(v,j) \in E(\lex{G}{H}) \quad  \text{if and only if}\quad [u = v \text{ and } (i,j) \in E(H)] \text{ or } [(u,v) \in E(G)]. 
\end{equation*}
We note that the following theorem is a corollary of a more general result \cite[Lemma 9]{Bonnet0RTW21} which determines the twin-width of a modular partition. However, we give an alternative proof of the result here for completeness. 
\begin{theorem}[{\cite[Lemma 9]{Bonnet0RTW21}}]\label{thm:lex} For any graphs $G,H$ we have 
	\begin{equation*}\tww\left(\lex{G}{H}\right) = \max\{\tww(G), \tww(H)\}.\end{equation*} 
\end{theorem}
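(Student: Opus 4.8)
The plan is to prove the two inequalities $\tww(\lex{G}{H}) \leq \max\{\tww(G),\tww(H)\}$ and $\tww(\lex{G}{H}) \geq \max\{\tww(G),\tww(H)\}$ separately. For the lower bound, I would use Proposition \ref{prop:induced} together with the observation that both $G$ and $H$ appear as induced subgraphs of $\lex{G}{H}$: fixing any vertex $i \in V(H)$, the set $\{(v,i) : v \in V(G)\}$ induces a copy of $G$, and fixing any vertex $u \in V(G)$, the set $\{(u,j) : j \in V(H)\}$ induces a copy of $H$. Hence $\tww(\lex{G}{H}) \geq \tww(G)$ and $\tww(\lex{G}{H}) \geq \tww(H)$, giving the lower bound immediately.

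For the upper bound, set $d = \max\{\tww(G),\tww(H)\}$ and build a $d$-sequence in two stages, mirroring the structure of the proofs of Theorems \ref{thm:carti} and \ref{thm:tensor} but crucially exploiting the fact that in $\lex{G}{H}$ a vertex $(u,i)$ sees every vertex $(v,j)$ with $uv \in E(G)$ \emph{regardless of $i$ and $j$}. In the first stage I would run a fixed $\tww(H)$-sequence $H_m, \dots, H_1$ inside each copy of $H$ in lockstep — that is, perform the $k$-th contraction of the $H$-sequence in the first copy, then the second copy, and so on, before moving to the $(k+1)$-st contraction. The key claim is that this stage creates \emph{no} red edges between distinct copies: if $(u,i)$ and $(u,j)$ are contracted in the copy over $u$, the only vertices whose adjacency to the merged vertex could differ are those $(u,k)$ in the same copy (governed by $H$) and, for vertices $(v,\ell)$ in another copy, adjacency depends only on whether $uv \in E(G)$, which is identical for $(u,i)$ and $(u,j)$. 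So across copies these vertices are true twins and the red edges stay confined within a single copy of $H$, where the red degree never exceeds $\tww(H) \leq d$. After this stage we are left with a trigraph whose underlying graph, contracting each copy of $H$ to a point, is exactly $G$, and moreover this trigraph is \emph{black} (no red edges remain, since inside each fully-contracted copy there is a single vertex).

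In the second stage I would run a $\tww(G)$-sequence $G_n,\dots,G_1$ on this black trigraph over $G$. Since the trigraph is black and isomorphic to $G$, performing the contractions of a $\tww(G)$-sequence on it produces exactly the same trigraphs (up to the black/red labelling) as running the sequence on $G$ itself, so the red degree stays at most $\tww(G) \leq d$ throughout. This completes a $d$-sequence for $\lex{G}{H}$ and yields $\tww(\lex{G}{H}) \leq d$.

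The main obstacle — and the place where lexicographic product is genuinely better behaved than the Cartesian or tensor products — is verifying rigorously that the first stage introduces no red edges between copies, even when the copies are at different stages of their contraction. The point to nail down carefully is that in $\lex{G}{H}$ the neighbourhood of any vertex $(u,i)$ outside its own copy is $\{(v,j) : uv \in E(G),\ j \in V(H)\}$, a set that does not depend on $i$, so within a single copy-over-$u$ all vertices have identical external neighbourhoods at every stage; this makes the two vertices being merged behave as twins with respect to everything outside the copy, and the contraction rule then only ever places red edges inside the copy. Once this invariant is established the red-degree bound $\max\{\tww(G),\tww(H)\}$ follows cleanly, and combined with the lower bound from Proposition \ref{prop:induced} we obtain the claimed equality.
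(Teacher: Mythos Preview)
Your proposal is correct and follows essentially the same approach as the paper: the lower bound via Proposition~\ref{prop:induced} and the upper bound via first contracting each copy of $H$ using a $\tww(H)$-sequence (with the key observation that vertices in the same copy are twins with respect to all other copies, so no red edges leave a copy), then contracting the resulting black copy of $G$ via a $\tww(G)$-sequence. The only difference is that the paper contracts each copy of $H$ to a single vertex in turn rather than in lockstep, but since no inter-copy red edges ever arise this ordering is immaterial.
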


\begin{proof}
	First, note that as there are induced subgraphs of $\lex{G}{H}$ that are isomorphic to both $G$ and $H$, we get that $\tww\left(\lex{G}{H}\right)\geq \max\{\tww(G), \tww(H)\}$ from Proposition~\ref{prop:induced}. It remains to prove that $\max\{ \tww(G), \tww(H)\}$ is an upper bound, which we show by giving a contraction sequence for $\lex{G}{H}$.
	
	Let $H_m, \ldots, H_1$ be a $\tww(H)$-sequence for $H$, and label the vertices of $G$ with $[n]$ in an arbitrary order.
	For a fixed $u\in [n]$, we call the $u$-th copy of $H$ the subgraph of $\lex{G}{H}$ induced by the vertex set $\{(u, i) : i\in V(H)\}$.
	We contract $\lex{G}{H}$ to $\lex{G}{K_1}$ by contracting each copy of $H$ in turn by following the sequence $H_m, \ldots, H_1$.
	That is, we first contract the first copy of $H$ to $K_1$ by following $H_m, \ldots, H_1$ and then start with the second copy of $H$.
	
	For any pair $(u, i)$ and $(v, k)$ with $u\neq v$, recall that  $(u,i)(v,k)\in E(\lex{G}{H})$ if and only if $(u,v)\in E(G)$.
	Thus, for any two vertices of the form $(u,i)$ and $(u,j)$ (i.e.,~two vertices in the same copy of $H$), if there is some $(v,k)$ such that $(u,i)(v,k)\in E(\lex{G}{H})$ but $(u,j)(v,k)\not\in E(\lex{G}{H})$, it must hold that $u=v$ (i.e., $(v,k)$ is in the same copy of $H$ as $(u,i)$ and $(u,j)$).
	This means that as we contract each copy of $H$ to $K_1$, any red edges must be in the copy of $H$, and so no vertex ever has a red degree greater than $\tww(H)$.
	Lastly, we are left with $\lex{G}{K_1}$, which is isomorphic to $G$, with no red edges, and so we can contract $G$ according to a $\tww(G)$-sequence, and it follows that the maximum red degree of the whole sequence, and an upper bound on $\tww(\lex{G}{H})$, is given by $\max\{\tww(G), \tww(H)\}$.
\end{proof}
\begin{remark}
	Since cographs are precisely the graphs of twin-width zero, this result gives another proof of the fact that the class of cographs is closed under lexicographic products.
\end{remark}

\subsection{Modular Product}
The modular product of $G$ and $H$, written $\gmod{G}{H}$, has vertex set $V(G) \times V(H)$ and for $u,v \in V(G)$, where $u\neq v$ and $i,j \in V(H)$ where $i\neq j$, we have
\begin{equation*}
	(u,i)(v,j) \in E(\gmod{G}{H}) \iff [(u,v) \in E(G) \text{ and } (i,j) \in E(H)] \text{ or } [(u,v) \not\in E(G) \text{ and } (i,j) \not\in E(H)],  
\end{equation*}furthermore $(u,i)(v,j) \notin E(\gmod{G}{H})$ whenever $u=v$ or $i=j$. Recall that $P_n$ denotes a path on $n$ vertices. 
\begin{restatable}{theorem}{mod}\label{thm:mod-lb}
	For $n\geq 5$, 
	\begin{equation*}\tww\left(\gmod{P_n}{P_n}\right)\geq n+1.\end{equation*} 
\end{restatable}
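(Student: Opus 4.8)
The plan is to derive the bound directly from Lemma~\ref{lowerbound}: writing $b\big((u,i),(v,j)\big) := |(N((u,i))\triangle N((v,j)))\setminus\{(u,i),(v,j)\}|$, it suffices to show $b\big((u,i),(v,j)\big)\geq n+1$ for every pair of distinct vertices of $\gmod{P_n}{P_n}$. The key structural observation is that the neighbourhood of a vertex in the modular product is a disjoint union of a ``small box'' and a ``large box'': writing $\overline{N}(x)=N_{P_n}(x)\cup\{x\}$ for the closed neighbourhood in $P_n$ and $V=V(P_n)$, one has
\begin{equation*}
  N_{\gmod{P_n}{P_n}}((u,i)) \;=\; \big(N_{P_n}(u)\times N_{P_n}(i)\big)\ \cup\ \big((V\setminus\overline{N}(u))\times(V\setminus\overline{N}(i))\big),
\end{equation*}
the union being disjoint because $N_{P_n}(u)$ and $V\setminus\overline{N}(u)$ are disjoint; the large box has size at least $(n-3)^2$ while the small box has size at most $4$. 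Expanding the symmetric difference of two such unions $N_1=S_1\cup L_1$ and $N_2=S_2\cup L_2$ gives $|N_1\triangle N_2|=|L_1\triangle L_2|+|S_1\triangle S_2|-2|S_1\cap L_2|-2|L_1\cap S_2|$, so everything reduces to understanding how two large boxes overlap, up to a bounded correction coming from the small boxes.

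The facts about $P_n$ I would record first are that distinct vertices of $P_n$ have distinct closed neighbourhoods, and $|\overline{N}(x)\triangle\overline{N}(y)|\geq 1$ for $x\neq y$, with equality exactly when $\{x,y\}\in\{\{1,2\},\{n-1,n\}\}$. Since $(V\setminus\overline{N}(u))\triangle(V\setminus\overline{N}(v))=\overline{N}(u)\triangle\overline{N}(v)$, this governs $|L_1\triangle L_2|$. I would then split into cases: (i) $u=v$ (hence $i\neq j$); (ii) $i=j$ (hence $u\neq v$), symmetric to (i); and (iii) $u\neq v$ and $i\neq j$; using the symmetries of $\gmod{P_n}{P_n}$ (swapping the two factors, reversing either path) to normalise positions within each case.

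In cases (i)–(ii) the small-box correction vanishes, because $S_1$ and $L_2$ (and $S_2$ and $L_1$) then have disjoint first coordinates, so $|N_1\triangle N_2|=|L_1\triangle L_2|+|S_1\triangle S_2|$; a short analysis according to the value of $|\overline{N}(i)\triangle\overline{N}(j)|$ shows this is at least $n+1$, with equality exactly when $u\in\{1,n\}$ and $\{i,j\}$ is an endpoint pair of $P_n$ (in which case $b=n-1+2|N_{P_n}(u)|=n+1$) — this is the source of the value $n+1$ in the statement. In case (iii) both $\overline{N}(u)\triangle\overline{N}(v)$ and $\overline{N}(i)\triangle\overline{N}(j)$ are non-empty; computing $|L_1\triangle L_2|=|L_1|+|L_2|-2\big(n-|\overline{N}(u)\cup\overline{N}(v)|\big)\big(n-|\overline{N}(i)\cup\overline{N}(j)|\big)$ shows that, outside the ``corner'' configurations where both $\{u,v\}$ and $\{i,j\}$ are endpoint pairs, the large boxes already differ in at least $n+1$ places for $n\geq5$, whereas in the corner configurations the large-box term equals only $2n-6$ or $2n-5$, but after checking which of the (at most four) elements of each small box lies in the opposite large box — costing the count at most $2$ — the total works out to $b=2n-4\geq n+1$ for $n\geq5$.

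The step I expect to be the main obstacle is case (iii): there the large-box term alone does not always beat $n+1$ for small $n$, so one must, for each of the finitely many near-corner configurations, pin down exactly which small-box elements fall inside the opposite large box and verify the combined count never drops below $n+1$. This is also precisely where the hypothesis $n\geq5$ is used: for $n=4$ the pair $(1,1),(2,2)$ has $b\big((1,1),(2,2)\big)=2n-4=4<n+1$, so the bound genuinely fails at $n=4$.
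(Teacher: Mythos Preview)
Your approach is correct and, like the paper's, rests on Lemma~\ref{lowerbound}: both proofs show that every pair of distinct vertices has $b\big((u,i),(v,j)\big)\ge n+1$. The organisation, however, is genuinely different. The paper never writes down the box decomposition; instead, for each pair it exhibits a single ``separating'' vertex $w\in V(P_n)$ with $vw\in E(P_n)$ but $uw\notin E(P_n)\cup\{u\}$, and observes that for \emph{every} $k$ outside $\{i,j\}$ exactly one of $(u,i),(v,j)$ is adjacent to $(w,k)$. This yields $n-1$ (when $i=j$) or $n-2$ (when $i\neq j$) elements of the symmetric difference in one stroke; in the latter case the same trick on the second coordinate gives another $n-2$, for $2n-4\ge n+1$, and in the former case two further witnesses are found ad hoc. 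Your box decomposition is more systematic and actually pins down the minimum value $n+1$ and the extremal pairs, but pays for this with the finitary corner-case analysis you flag as the main obstacle; the paper's row-of-witnesses trick sidesteps that bookkeeping entirely and is considerably shorter.
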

\begin{proof}
	We prove this by showing that for any pair of distinct vertices in $\gmod{P_n}{P_n}$, contracting the pair gives a trigraph with red degree at least $n+1$.
	
	We will work with $\gmod{P_n}{P'_n}$, where $P_n$ and $P'_n$ are both paths on $n$ vertices. The extra prime marker serves to distinguish the two graphs and makes the proof easier to follow.
	Label the vertices of each of $P_n$ and $P'_n$ with the elements of $[n]$ such that, for $i\in[n-1]$, vertices $i$ and $i+1$ are adjacent.
	
	Let $(u,i)$ and $(v,j)$ be the first pair of vertices contracted by any contraction sequence.
	Without loss of generality, we can say that $u\leq \frac{n}{2}$ and $u<v$.
	We note that the case $u=v$ is, by symmetry of $P_n$ and $P'_n$, absorbed by Case 1 below.
	
	\textbf{Case 1 [$i=j$]:} As $(u,i)$ and $(v,j)$ must be distinct to be contracted, it must hold that $u\neq v$, and so there must exist some vertex, say $w$, in $P_n$ such that $u\neq w$, $uw\not\in E(P_n)$ but $vw\in E(P_n)$.
	For each $k\in V(P'_n)\setminus \{i\}$, $(u,i)$ is adjacent to $(w, k)$ in $\gmod{P_n}{P'_n}$ if and only if $(v,j)$ is not adjacent to $(w,k)$, adding $n-1$ red edges incident with the new contracted vertex.
	
	Additionally, let $\ell$ be a neighbour of $i$ in $P'_n$.
	If $uv\in E(P_n)$, then $(u,i)(v,\ell)$ is in $E(\gmod{P_n}{P'_n})$ but $(v,i)(v,\ell)$ is not in $E(\gmod{P_n}{P'_n})$, and $(v,i)(u,\ell)$ is in $E(\gmod{P_n}{P'_n})$ but $(u,i)(u,\ell)$ is not in $E(\gmod{P_n}{P'_n})$, leading to two more red edges incident with the new contracted vertex.
	Alternatively, if $uv\not\in E(P_n)$, then for each $k \in V(P'_n)$ that is not a neighbour of $i$, and satisfying $k\neq i$, the edge $(u,i)(v,k)$ is in $E(\gmod{P_n}{P'_n})$ but the edge $(v,i)(v,k)$ is not in $E(\gmod{P_n}{P'_n})$. For $n\geq 5$, and for any choice of $i$, there are at least two choices for $k$ leading to at least two more red edges incident with the new contracted vertex.
	In either case, we have at least $n+1$ red edges adjacent to the new vertex.
	
	\textbf{Case 2 [$i\neq j$]:} Again there must exist a vertex, say $w$, in $P_n$ such that $u\neq w$, $uw\not\in E(P_n)$ but $vw\in E(P_n)$.
	For each $k\in V(P'_n)\setminus \{i,j\}$, $(u,i)$ is adjacent to $(w,k)$ in $\gmod{P_n}{P'_n}$ if and only if $(v,j)$ is not adjacent to $(w,k)$, adding $n-2$ red edges incident with the new contracted vertex.
	
	Similarly, there must exist a vertex, say $k$, in $P'_n$ such that $i\neq k$, $ik\not\in E(P'_n)$ but $jk\in E(P'_n)$.
	For each $x\in V(P_n)\setminus \{u,v\}$, $(u,i)$ is adjacent to $(x,k)$ in $\gmod{P_n}{P'_n}$ if and only if $(v,j)$ is not adjacent to $(x,k)$, adding a further $n-2$ red edges incident with the new contracted vertex.
	
	For $n\geq 5$, such a contraction must result in at least $2n - 4 \geq n + 1$ red edges incident with the new vertex, concluding the result.
\end{proof}

A computational search using a SAT encoding for twin-width~\cite{Schidler2022} has shown that $\tww(\gmod{P_6}{P_6}) = 9$, proving that this bound is not tight. However, since for any $n\geq 1$ the path $P_n$ has twin-width at most one and degree at most two, we obtain the following corollary to Theorem \ref{thm:mod-lb}. 

\begin{corollary}There does not exist a bounded function $f$ such that for all graphs $G,H$ we have \begin{equation*}\tww(\gmod{G}{H})\leq f(\tww(G),\tww(H),\Delta(G),\Delta(H)).\end{equation*}
\end{corollary}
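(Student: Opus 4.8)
The plan is to derive this immediately from Theorem~\ref{thm:mod-lb} by producing a single infinite family of modular products whose twin-width diverges while the twin-widths and maximum degrees of the two factors stay bounded. The natural family is $\{\gmod{P_n}{P_n}\}_{n}$. First I would record the two elementary facts that make paths the right choice: for every $n\geq 1$ the path $P_n$ satisfies $\Delta(P_n)\leq 2$, and $\tww(P_n)\leq 1$ (paths have bounded tree-width, so this follows from \cite{TWI}; alternatively one can contract from one endpoint and check directly that the red degree never exceeds one). In particular, for every $n\geq 5$ the quadruple $(\tww(P_n),\tww(P_n),\Delta(P_n),\Delta(P_n))$ is the fixed tuple $(1,1,2,2)$, independent of $n$.

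Next, suppose toward a contradiction that some function $f$ satisfies $\tww(\gmod{G}{H})\leq f(\tww(G),\tww(H),\Delta(G),\Delta(H))$ for all graphs $G,H$. Applying this with $G=H=P_n$ and using the facts above gives $\tww(\gmod{P_n}{P_n})\leq f(1,1,2,2)=:c$, a constant not depending on $n$. But Theorem~\ref{thm:mod-lb} gives $\tww(\gmod{P_n}{P_n})\geq n+1$ for all $n\geq 5$, so choosing any integer $n\geq\max\{5,c\}$ yields $n+1\leq c\leq n$, a contradiction. This proves the claim for an arbitrary (not necessarily bounded) $f$, which a fortiori gives the stated non-existence of a bounded $f$.

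There is essentially no obstacle here: the entire quantitative content of the corollary is carried by Theorem~\ref{thm:mod-lb}, and the only point that needs a moment's care is that $f$ is not assumed to be monotone, so one should plug the exact values $\tww(P_n)=1$ and $\Delta(P_n)=2$ (valid for $n\geq 5$) into $f$ rather than merely bounds on them. Since these quadruples all coincide anyway, even a cruder argument — over the finitely many possible input tuples in $\{0,1\}^2\times\{1,2\}^2$ one must diverge — would suffice.
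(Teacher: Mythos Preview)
Your proposal is correct and follows exactly the same route as the paper: exhibit the family $\{\gmod{P_n}{P_n}\}$, note that $\tww(P_n)\le 1$ and $\Delta(P_n)\le 2$ for all $n$, and invoke Theorem~\ref{thm:mod-lb} to conclude. Your extra care about non-monotonicity of $f$ and the observation that you in fact rule out any function (not just a bounded one) are fine refinements, but the core argument is identical to the paper's one-line derivation.
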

\subsection{Corona Product}

The \emph{corona product} of $G$ and $H$, written $\cor{G}{H}$, has vertex set $V(G) \times (\{\infty\}\cup V(H))$ and for $u,v \in V(G)$ and $i,j \in V(H)\cup \{\infty\}$,
$(u,i)(v,j) \in E(\cor{G}{H})$ if and only if one of the following holds:
\begin{itemize}
	\item $(u,v)\in E(G)$ and $i = j=\infty$, or
	\item $(i,j)\in E(H)$ and $u = v$, or
	\item $u = v$ and $i = \infty$ and $j \neq \infty$. 
\end{itemize}

\begin{restatable}{theorem}{corprod}\label{thm:corona} For any graphs $G,H$ we have 
	\begin{equation*}\tww\left(\cor{G}{H}\right) \leq \max\{\tww(G)+1, \tww(H), 2\}.\end{equation*} 
\end{restatable}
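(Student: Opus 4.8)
The plan is to split the contraction sequence into two stages, in the same spirit as the proof of Theorem~\ref{thm:carti}. Recall from the definition that $\cor{G}{H}$ consists of a copy of $G$ on the ``root'' vertices $r_u := (u,\infty)$, $u\in V(G)$, together with, for each $u$, a copy of $H$ on $\{(u,i):i\in V(H)\}$ every vertex of which is joined to $r_u$ by a (black) edge; moreover a copy-of-$H$ vertex $(u,i)$ has no neighbour outside $\{r_u\}\cup\{(u,j):ij\in E(H)\}$, and distinct copies of $H$ are pairwise non-adjacent and non-adjacent to each other's roots.

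\textbf{Stage 1.} For each $u\in V(G)$ in turn, contract the copy of $H$ attached to $r_u$ down to a single vertex $p_u$ by following a $\tww(H)$-sequence for $H$. Since every vertex of that copy is black-adjacent to $r_u$, each contraction keeps the merged vertex black-adjacent to $r_u$, so $r_u$ never acquires a red edge and the only red edges created are those dictated by the $\tww(H)$-sequence; processing the copies one at a time, and using that distinct copies are non-adjacent, the red degree throughout Stage~1 stays at most $\tww(H)$. After Stage~1 we are left with a red-edge-free trigraph over $\cor{G}{K_1}$: a copy of $G$ on the roots $r_u$, with a single pendant $p_u$ attached to each $r_u$ by a black edge.

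\textbf{Stage 2.} Fix a $\tww(G)$-sequence $G=G_n,\dots,G_1$ for $G$ and carry it out on the roots, maintaining the invariant that the sub-trigraph induced by the roots is always a copy of the current $G_i$ and that every super-vertex of $G_i$ carries exactly one pendant, joined to it by a single (black or red) edge and to nothing else. To realise the contraction of super-vertices $a$ and $b$ of $G_i$: first contract their pendants $p_a,p_b$ into one vertex $p_{ab}$, which then has red edges exactly to $r_a$ and $r_b$ (red degree $2$); then contract $r_a,r_b$ into $r_{ab}$ as prescribed by the $G$-sequence, so that $r_{ab}$ is red-adjacent to at most $\tww(G)$ other roots and, since $p_{ab}$ lies in both neighbourhoods joined by red edges, to $p_{ab}$ as well, giving red degree at most $\tww(G)+1$; the invariant is restored. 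Throughout, no root ever has red degree exceeding $\tww(G)$ (to other roots) plus $1$ (to its own pendant), and no pendant ever has red degree exceeding $2$. When the roots are reduced to a single vertex with one pendant, contract the pendant into it to reach $K_1$.

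Combining the two stages, the whole sequence is a $\max\{\tww(H),\,\tww(G)+1,\,2\}$-sequence, which is the claimed bound. The only delicate part is the bookkeeping in Stage~2: one must check that contracting the two pendants \emph{before} the two roots never pushes a root past red degree $\tww(G)+1$ (it does not, because in $G_i$ every root already has red degree at most $\tww(G)$ to the other roots and is adjacent to exactly one pendant), and one should note that the ``$2$'' in the bound is needed only to absorb the transient red degree of a freshly merged pendant $p_{ab}$ in the case $\tww(G)=0$.
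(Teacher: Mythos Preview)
Your proof is correct and follows essentially the same two-stage approach as the paper: first contract each attached copy of $H$ to a single pendant using a $\tww(H)$-sequence (keeping the root black-adjacent throughout), then carry out a $\tww(G)$-sequence on the roots, contracting the two pendants immediately before each root contraction. The bookkeeping and the resulting bound $\max\{\tww(H),\tww(G)+1,2\}$ match the paper's argument exactly.
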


\begin{proof}
	Let $H_m, \ldots, H_1$ be a $\tww(H)$-sequence for $H$, and label the vertices of $G$ with $[n]$ in an arbitrary order.
	For a fixed $u\in [n]$, we call the $u$-th copy of $H$ the subgraph of $\cor{G}{H}$ induced by the vertex set $\{(u, i) : i\in V(H)\}$.
	We contract $\cor{G}{H}$ by contracting each copy of $H$ in turn by following the sequence $H_m, \ldots, H_1$.
	That is, we first contract the first copy of $H$ to $K_1$ by following $H_m, \ldots, H_1$ and then start with the second copy of $H$.
	
	For any pair $(u, i)$ and $(v, k)$, where $(u,i)$ is in the $u$-th copy of $H$ and $(v, k)$ is not in the $u$-th copy of $H$ (i.e., either $k=\infty$ or $u\neq v$), recall that  $(u,i)(v,k)\in E(\cor{G}{H})$ if and only if $(v,k) = (u,\infty)$.
	But $(u,\infty)$ is adjacent to every vertex in the $u$-th copy of $H$.
	This means that as we contract each copy of $H$ to $K_1$, any red edges must be in the copy of $H$, and so no vertex ever has a red degree greater than $\tww(H)$.
	
	After this process, we are left with a copy of $G$ on vertices of the form $(u, \infty)$ with a pendant vertex connected with a black edge to each vertex.
	For each $u$, let $(u, p_u)$ be the pendant vertex adjacent to $(u, \infty)$.
	Let $G_n,\ldots,G_1$ be a $\tww(G)$-sequence for $G$, and let $\{v_{n-1},u_{n-1}\},\ldots,\{v_1,u_1\}$ be a sequence of pairs of vertices in $V(G)$ such that for $k\in [n-1]$, $G_k$ is obtained from $G_{k+1}$ by contracting vertices $v_k$ and $u_k$.
	For $k\in [n-1]$ decreasing, we contract vertices as follows.
	
	Contract vertices $(u,p_u)$ and $(v,p_v)$ to vertex $(w,p_w)$, and then contract vertices $(u,\infty)$ and $(v,\infty)$ to the vertex $(w,\infty)$.
	Contracting to $(w,p_w)$ creates a vertex of degree two with two red edges, but both $(u,\infty)$ and $(v,\infty)$ will have red degree of at most $\tww(G)+1$.
	Then when $(u,\infty)$ and $(v,\infty)$ are contracted, they will have a red edge to $(w,p_w)$ plus any red edges resulting from the $\tww(G)$-sequence for $G$, for maximum of $\max\{2, \tww(G)+1\}$. 
	
	Once these contractions are completed for all $k\in [n-1]$ we are left with a trigraph over $P_2$ which can be contracted without introducing any edges.
\end{proof}

Note that as there are induced subgraphs of $\cor{G}{H}$ that are isomorphic to both $G$ and $H$, we trivially get that $\tww\left(\cor{G}{H}\right)\geq \max\{\tww(G), \tww(H)\}$ and so the $\tww(H)$ is a necessary part of the bound.

Using a computational search~\cite{Schidler2022} to compute twin-widths, we give the following examples that demonstrate that none of the elements maximised in our bound can be reduced by a constant or removed entirely.
If $G=K_3$ and $H = K_1$ then $\tww(G) = 0$, $\tww(H) = 0$, and $\tww(\cor{G}{H}) = 2$, so the $2$ is necessary in the maximum. 
If $G$ is the Paley graph $P(9)$ and $H=K_1$ then $\tww(G) = 4$, $\tww(H) = 0$, and $\tww(\cor{G}{H}) = 5$, so $\tww(G)+1$ is necessary in the maximum.

The $l$-corona product, a repeated product defined as $G \bigcirc^1 H = G \bigcirc H$ and for an integer $\ell\geq 2$, $G \bigcirc^\ell H = (G \bigcirc^{\ell-1} H) \bigcirc H$ was recently introduced \cite{Furmanczyk-lcorona}.
For this repeated product, we derive the following from repeated application of Theorem~\ref{thm:corona}.
\begin{corollary}
	For any graphs $G$ and $H$ and integer $\ell \geq 1$ we have
	\begin{equation*}\tww(G \bigcirc^\ell H) \leq \max \{\tww(G)+\ell,\; \tww(H) + \ell - 1,\; \ell+1\}.\end{equation*}
\end{corollary}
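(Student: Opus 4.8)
The plan is a straightforward induction on $\ell$, using Theorem~\ref{thm:corona} as the inductive step. For the base case $\ell = 1$, the definition gives $G \bigcirc^1 H = G \bigcirc H$, and the claimed bound $\max\{\tww(G)+1,\; \tww(H)+0,\; 2\}$ is \emph{literally} the bound $\max\{\tww(G)+1,\; \tww(H),\; 2\}$ of Theorem~\ref{thm:corona}, so there is nothing to check.

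For the inductive step, suppose the bound holds for $\ell-1$, i.e.\ $\tww(G \bigcirc^{\ell-1} H) \leq \max\{\tww(G)+\ell-1,\; \tww(H)+\ell-2,\; \ell\}$. Since $G \bigcirc^\ell H = (G \bigcirc^{\ell-1} H) \bigcirc H$ by definition, applying Theorem~\ref{thm:corona} with $G$ replaced by $G \bigcirc^{\ell-1} H$ gives
\[
\tww(G \bigcirc^\ell H) \;\leq\; \max\{\tww(G \bigcirc^{\ell-1} H)+1,\; \tww(H),\; 2\}.
\]
Substituting the inductive hypothesis and distributing the $+1$ over the inner maximum yields
\[
\tww(G \bigcirc^\ell H) \;\leq\; \max\{\tww(G)+\ell,\; \tww(H)+\ell-1,\; \ell+1,\; \tww(H),\; 2\}.
\]
It then remains only to discard the redundant terms: since $\ell \geq 1$ we have $\tww(H) \leq \tww(H)+\ell-1$ and $2 \leq \ell+1$, so both $\tww(H)$ and $2$ are absorbed, leaving exactly the desired bound $\max\{\tww(G)+\ell,\; \tww(H)+\ell-1,\; \ell+1\}$.

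There is no real obstacle here; the corollary is a mechanical consequence of Theorem~\ref{thm:corona}, and the only point requiring a little care is the bookkeeping of the index shift (checking that the hypothesis for $\ell-1$ reads $\tww(H)+(\ell-1)-1$ and $(\ell-1)+1$, and that the two spurious terms produced by Theorem~\ref{thm:corona} are dominated precisely because $\ell\geq 1$).
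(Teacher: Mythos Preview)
Your proof is correct and matches the paper's approach exactly: the paper states only that the corollary is ``derive[d] \ldots\ from repeated application of Theorem~\ref{thm:corona},'' which is precisely the induction you wrote out in full.
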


\subsection{Rooted Product}
The \emph{rooted product} of a graph $G$ and a rooted graph $H$ with root $r$, written $\rooted{G}{H}$, has vertex set $V(G) \times V(H)$ and for $u,v \in V(G)$ and $i,j \in V(H)$,
\begin{equation*}
	(u,i)(v,j) \in E(\rooted{G}{H}) \quad  \text{if and only if}\quad[(u,v) \in E(G) \text{ and } i = j = r] \text{ or } [ u = v \text{ and } (i,j)\in E(H)]. 
\end{equation*}

\begin{restatable}{theorem}{rootedprod}\label{thm:rooted}
	For a graph $G$ and a rooted graph $H$ with root vertex $r$, 
	\begin{equation*}\tww\left(\rooted{G}{H}\right)\leq \max \{ \tww(H') + 1, d_H(r), \tww(G) + 1, 2\}\end{equation*} 
	where $H'$ is the induced subgraph of $H$ on vertex set $V(H)\setminus \{r\}$ and $d_H(r)$ is the degree of vertex $r$ in $H$.
\end{restatable}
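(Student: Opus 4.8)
The plan is to follow the strategy used for the corona product in Theorem~\ref{thm:corona}: first contract inside the copies of $H$, then contract along a $\tww(G)$-sequence. For $u\in V(G)$ call the subgraph of $\rooted{G}{H}$ induced by $\{(u,i):i\in V(H)\}$ the \emph{$u$-th copy of $H$}. Note that the root vertices $(u,r)$, $u\in V(G)$, induce a copy of $G$; that the non-root vertices $\{(u,i):i\in V(H)\setminus\{r\}\}$ of the $u$-th copy induce a copy of $H'$; and that a non-root vertex has no neighbours outside its own copy.

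In the first stage we process the copies one at a time, and in the $u$-th copy we contract the non-root vertices down to a single vertex $(u,\ast)$ by following a $\tww(H')$-sequence for $H'$. We claim the red degree stays at most $\max\{\tww(H')+1,\; d_H(r)\}$ throughout. Red edges among the non-root groups of the $u$-th copy behave exactly as in the $\tww(H')$-sequence, so such a group has at most $\tww(H')$ red edges to other non-root groups; it may in addition be red-adjacent to $(u,r)$, for red degree at most $\tww(H')+1$. The key point is the bound on the red degree of $(u,r)$: at every moment $(u,r)$ is adjacent (by a black or red edge) to at most $d_H(r)$ of the non-root groups of its copy, since each such adjacency is inherited from an original edge from $(u,r)$ to a non-root vertex, each of the (at most $d_H(r)$) such vertices lies in exactly one group, and $(u,r)$ has no other potential red neighbours (its edges to other root vertices stay black and untouched, and it has no neighbour in another copy). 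Hence $(u,r)$ has red degree at most $d_H(r)$. Finally, a contraction inside the $u$-th copy creates no red edge incident to any other copy, because the contracted vertices have no neighbours outside that copy; in particular already-processed copies are unaffected.

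After the first stage we are left with a trigraph over $G$ with one extra pendant vertex $(u,\ast)$ attached to each $(u,r)$, in which every root vertex is incident to at most one red edge (the possible edge to its own pendant). From here we argue exactly as at the end of the proof of Theorem~\ref{thm:corona}: fixing a $\tww(G)$-sequence $G_n,\dots,G_1$ with contraction pair $\{v_k,u_k\}$ turning $G_{k+1}$ into $G_k$, for each $k\in[n-1]$ we first contract the pendants $(u_k,\ast)$ and $(v_k,\ast)$ into $(w_k,\ast)$, a vertex of red degree $2$, and then contract $(u_k,r)$ and $(v_k,r)$ into $(w_k,r)$. In between, each of $(u_k,r),(v_k,r)$ carries its at most $\tww(G)$ red edges inside $G_{k+1}$ together with one red edge to $(w_k,\ast)$; and after the root-contraction, $(w_k,r)$ carries its at most $\tww(G)$ red edges inside $G_k$ together with one red edge to $(w_k,\ast)$. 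So the red degree in this stage never exceeds $\max\{\tww(G)+1,\,2\}$, and we end with a trigraph over $P_2$ (or two isolated vertices), which contracts with no further red edges. Altogether this yields a contraction sequence of red degree at most $\max\{\tww(H')+1,\; d_H(r),\; \tww(G)+1,\; 2\}$.

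I expect the main obstacle to be the pigeonhole argument in the first stage showing that $(u,r)$ never accumulates more than $d_H(r)$ red edges while its copy's non-root part is being contracted; the rest is bookkeeping analogous to the Cartesian (Theorem~\ref{thm:carti}) and corona (Theorem~\ref{thm:corona}) product proofs.
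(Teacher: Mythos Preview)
Your proof is correct and follows essentially the same approach as the paper: first contract each copy of $H'$ to a single pendant using a $\tww(H')$-sequence (bounding the red degree by $\max\{\tww(H')+1,\,d_H(r)\}$), then interleave pendant-contractions with a $\tww(G)$-sequence on the roots exactly as in the corona proof. Your pigeonhole justification for the $d_H(r)$ bound on the root's red degree is in fact more explicit than the paper's one-line remark.
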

\begin{proof}
	We show this by giving a contraction sequence that obtains this bound.
	For each $v\in V(G)$, let the $v$'th copy of $H$ be the subgraph of $\rooted{G}{H}$ induced by vertices of the form $\{ (v,i) : i\in V(H)\}$, and let the $v$'th copy of $H'$ be the induced subgraph of $\rooted{G}{H}$ on vertices of the form $\{(v,i) \mid i\in V(H) \text{ and } i\neq r\}$.
	
	We first contract each copy of $H'$ to $K_1$, then follow a $\tww(G)$-sequence of $G$ interspersed with contractions of the leaf vertices resulting from contractions of the copies of $H'$.
	
	Note that any edge in $\rooted{G}{H}$ that leaves the $v$'th copy of $H'$ must be incident with $(v, r)$, so as we contract each copy of $H'$ the maximum red degree of any vertex within the copy of $H'$ must be at most $\tww(H')+1$.
	At the same time, the maximum red degree of $(v,r)$ is at most $d_H(r)$, as no vertex of the form $(u, r)$, for any $u\in V(G)$, is involved in a contraction.
	
	After these contractions, we are left with $\rooted{G}{P_2}$ where each edge of the form $(v,i)(v,j)$ is potentially red.
	Let $G_n, \ldots, G_1$ be a $\tww(G)$-sequence for $G$, and
	let $\{v_{n-1}, u_{n-1}\}, \ldots, \{v_1, u_1\}$ be a sequence of pairs of vertices such that for $k\in [n-1]$, $G_k$ is obtained from $G_{k+1}$ by contracting vertices $v_k$ and $u_k$.
	For $k\in [n-1]$, let $(v_k,i)$ (respectively $(u_k,i)$) be the one vertex adjacent to $(v_k,r)$ (respectively $(u_k,r)$) in the same copy of $H$.
	For $k\in [n-1]$ decreasing, first contract the vertices $(u_k,i)$ and $(v_k,i)$, and then contract
	the vertices $(v_k, r)$ and $(u_k, r)$.
	Contracting $(u_k,i)$ and $(v_k, i)$ will create a vertex of degree at most 2 (and hence red degree at most 2), and the subsequent contraction of $(u_k,r)$ and $(v_k, r)$ will have red degree at most $\tww(G)+1$.
	
	The max red degree of this sequence is thus at most $\max \{\tww(H') + 1, d_  \mathbb{H}(r), \tww(G)+1, 2\}$.
\end{proof}

By taking $G=C_3$ and $H=P_2$, we can calculate $\tww(\rooted{G}{H}) = 2$ while $\tww(G)+1 = 1$ and $\tww(H') + 1 = 1$, demonstrating that the 2 is required.

\section{Bounding the Twin-width of Replacement and Zig-zag Products}\label{sec:nonstandrd} We now define the replacement and zig-zag products which have a slightly different form to the products we have seen so far, see \cite{Alon,Reingold} for more details. Both products require $G$ to be a $d$-regular graph, $H$ to be a $\delta$-regular graph on $d$ vertices, and both graphs to be supplied with a one-to-one labelling of each of the edges originating from each vertex with the labels $[d]$ and $[\delta]$ respectively. Note that such a labelling always exists for any regular graph as a single edge can have two different labels from each endpoint (i.e. this need not be a coloring). The structure of the resulting products will depend on this labelling as they will be described using the following rotation maps, which in turn depend on the labelling. 

For a $d$-regular undirected graph $G$, where $V(G)=[n]$, with a one-to-one edge labeling by $[d]$ from each vertex, the \emph{rotation map}
$\operatorname{Rot}_G : [n] \times  [d] \rightarrow [n] \times [d]$ is defined as follows: $\operatorname{Rot}_G(v, i) = (w, j)$ if the $i$-th  edge incident to $v$ leads to $w$, and this edge is the $j$-th  edge incident to $w$. Observe that $\operatorname{Rot}_G(\operatorname{Rot}_G(v, i))=(v, i) $ for any $(v, i)$, and thus $\operatorname{Rot}_G$ defines the adjacency relation in a simple undirected graph $G$.

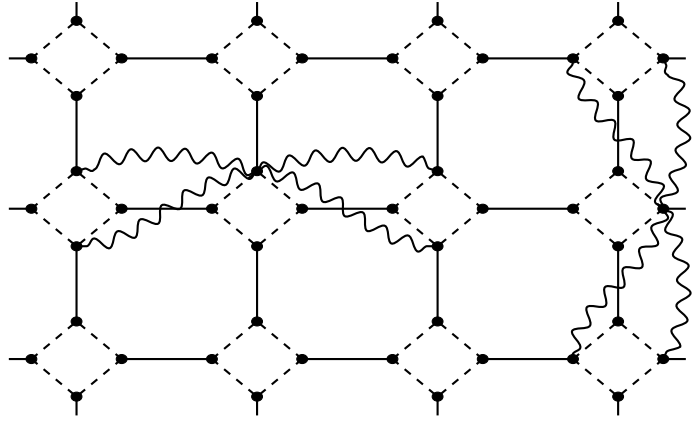
\begin{figure}[!htb]
	\center\begin{tikzpicture}[xscale=0.6,yscale=0.5]	\usetikzlibrary{arrows.meta}
		\usetikzlibrary{decorations.markings}
		\usetikzlibrary{decorations.pathreplacing}
		\foreach \x in {0,2,4,6,8,10,12,14}
		\foreach \y in {-4,0,4}{
			\draw[fill] (\x,\y) circle (.12);}
		\foreach \x in {0,4,8,12}
		\foreach \y in {-4,0,4}{
			\draw[dashed,thick] (\x,\y) --  (\x+1,\y+1);
			\draw[dashed,thick] (\x,\y) --  (\x+1,\y-1);
			\draw[dashed,thick] (\x+2,\y) --  (\x+1,\y+1);
			\draw[dashed,thick] (\x+2,\y) --  (\x+1,\y-1);}
		\foreach \x in {1,5,9,13}
		\foreach \y in {-4,0}{
			\draw[thick] (\x,\y+1) --  (\x,\y+3);}
		\foreach \x in {2,6,10}
		\foreach \y in {-4,0,4}{
			\draw[thick] (\x,\y) --  (\x+2,\y);}
		\foreach \x in {1,5,9,13}
		\foreach \y in {-5,-3,-1,1,3,5}{\draw[fill] (\x,\y) circle (.12);}
		\foreach \x in {1,5,9,13}{
			\draw[thick] (\x,5) --  (\x,5.5);
			\draw[thick] (\x,-5) --  (\x,-5.5);}
		\foreach \y in {-4,0,4}{
			\draw[thick] (-.5,\y) --  (0,\y);
			\draw[thick] (14,\y) --  (14.5,\y);}
		
		\draw[decorate, decoration = {snake},thick] (5,1) to[out=20,in=160 ] (9,1);
		\draw[decorate, decoration = {snake},thick] (5,1) to[out=-10,in=180 ] (9,-1);
		\draw[decorate, decoration = {snake},thick] (5,1) to[out=160,in=20 ] (1,1);
		\draw[decorate, decoration = {snake},thick] (5,1) to[out=190,in=0 ] (1,-1);
		
		\draw[decorate, decoration = {snake},thick] (14,0) to[out=70,in=-70 ] (14,4);
		\draw[decorate, decoration = {snake},thick] (14,0) to[out=-70,in=70 ] (14,-4);
		\draw[decorate, decoration = {snake},thick] (14,0) to[out=100,in=-90 ] (12,4);
		\draw[decorate, decoration = {snake},thick] (14,0) to[out=-100,in=90 ] (12,-4);
		
	\end{tikzpicture}
	\caption{This figure depicts the rooted and zig-zag products where $G$ is a grid, and $H$ is a $4$-cycle. The edges of $G\replace H$ are straight-solid and dotted, where the straight-solid edges correspond to edges of $G$. The edges of $G\zigzag H$ are wiggly, though most are left out to avoid over-complicating the figure.}\label{fig:repzigzag}
\end{figure}
\subsection{Replacement Product}

If $G$ is a $d$-regular graph on $[n]$ with rotation map $\operatorname{Rot}_G $ and $H$ is a $\delta$-regular graph on $[d]$ with rotation map $\operatorname{Rot}_H $ their \emph{replacement product} $G \replace H$ is the $(\delta + 1)$-regular graph on $[n] \times [d]$ whose rotation map $\operatorname{Rot}_{G \replace H}$ is given by 
\begin{equation*}\operatorname{Rot}_{G \replace H}((v,k),i) = \begin{cases}
		((v,m),j), \text{ where }(m,j)= \operatorname{Rot}_H(k,i) & \text{ if }i \leq   \delta\\
		\left(\operatorname{Rot}_{G}(v,k),i \right) &   \text{ if }i= \delta+1.
\end{cases}\end{equation*}

It will help to think of each vertex $v$ of $G$ being replaced by a \emph{cloud} $C(v)=\{(v,i) : i\in [d]\}$ of $d$ vertices. For each $u\in V(G)$ the $d$ vertices of its cloud are a copy of $H$ and for each $uv\in E(G)$ there is exactly one edge of the form $(u,i)(v,j) \in E(G \replace H)$, where the $i$ and $j$ depend on the rotation maps $\operatorname{Rot}_{G}$ and $\operatorname{Rot}_{H}$. We now prove an elementary result which will be useful when reducing the clouds.

\begin{restatable}{lemma}{spider}\label{lem:spider}Let $G$ be any graph and let $S\subseteq V(G)$ be such that $|N(s)\setminus S|\geq 1$ for all $s\in S$. Then there is a $\sum_{s\in S} |N(s)\setminus S|$-sequence reducing $S$ to a single vertex in $G$. \end{restatable}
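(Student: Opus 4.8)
The plan is to contract the vertices of $S$ one at a time into a single growing ``blob'', keeping track of the red degree throughout. Fix an arbitrary ordering $S = \{s_1, \dots, s_m\}$ of $S$ (if $m \le 1$ there is nothing to do), write $r = \sum_{s \in S} |N(s) \setminus S|$, and note that the hypothesis $|N(s) \setminus S| \geq 1$ for all $s \in S$ gives $r \geq m$. Set $B_1 = s_1$ and, for $k = 2, \dots, m$, obtain $B_k$ by contracting $B_{k-1}$ and $s_k$; after the last step $S$ has been reduced to the single vertex $B_m$. I claim that every trigraph arising in this process has red degree at most $r$, which is exactly what is required.

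The key point is to bound the red degree of the blob $B_k$. Since contractions only ever touch vertices of $S$, the neighbourhood of $B_k$ in the current trigraph is precisely $N(B_k) = \big(\bigcup_{i \le k} N_G(s_i)\big) \setminus \{s_1, \dots, s_k\}$, and its red degree is at most $|N(B_k)|$. I would split this neighbourhood into the part outside $S$ and the part inside $S$. The outside part is $\bigcup_{i \le k} (N_G(s_i) \setminus S)$, of size at most $\sum_{i \le k} |N_G(s_i) \setminus S|$. The inside part is contained in $\{s_{k+1}, \dots, s_m\}$, so it has size at most $m - k$; and because $|N_G(s_j) \setminus S| \geq 1$ for every $j$, this is at most $\sum_{j > k} |N_G(s_j) \setminus S|$. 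Adding the two estimates gives $|N(B_k)| \leq \sum_{s \in S} |N_G(s) \setminus S| = r$.

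It then remains to check the red degree of every other vertex $x$, i.e.\ $x \in V(G) \setminus S$ or $x = s_j$ with $j > k$. At any time there is only one blob, and any edge incident to $x$ other than a possible edge to the current blob joins $x$ to a not-yet-contracted vertex (in $S$ or in $V(G) \setminus S$), which has never been modified and hence is black. So $x$ is incident to at most one red edge, and its red degree is at most $1 \le r$. Consequently the described sequence of $m-1$ contractions is an $r$-sequence reducing $S$ to a single vertex. I do not anticipate a real obstacle here; the only delicate point is the bookkeeping that turns ``at most $m-k$ leftover vertices of $S$'' into a term of the sum defining $r$, which is exactly where the hypothesis $|N(s) \setminus S| \geq 1$ is used (and the example of two vertices of $S$ with disjoint single external neighbours shows the bound $r$ is attained).
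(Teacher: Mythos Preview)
Your proof is correct and follows essentially the same idea as the paper's: bound the total degree of a contracted vertex by splitting its neighbourhood into the part outside $S$ (controlled directly by the sum) and the part still inside $S$ (controlled by the hypothesis $|N(s)\setminus S|\ge 1$, which lets you trade each remaining $s_j$ for a term of the sum). The one noteworthy difference is that the paper fixes an \emph{arbitrary} contraction sequence on $S$ and bounds the degree of every surviving $S$-vertex uniformly, whereas you commit to the specific ``one growing blob'' sequence; this buys you the pleasant simplification that every non-blob vertex has red degree at most $1$, at the cost of proving a slightly weaker statement (existence of one good sequence rather than all sequences being good). Since the lemma only asks for existence, your version is entirely adequate.
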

\begin{proof}Fix any sequence of contractions for $H=G[S]$, the subgraph of $G$ induced by the set $S$. For any $v\in S$ and $t\geq 0$ let  $S_t$, and $N_t(v)$, be the remaining vertices in $S$, and the neighbourhood of $v$, after $t\geq 0$ contractions respectively. Let $u\in S_t$ be an arbitrary vertex after any $t\geq 0$ contractions, then \begin{equation}\label{eq:nbhbdd} |N_t(u)| = \left|(S_t\cap N_t(u) )\cup ( N_t(u)\setminus S_t ) \right| \leq  \left|(S_t\cap N_t(u))\cup \bigcup _{s\in S_t\setminus N_t(u)}  N_t(s)\setminus S_t  \right|,\end{equation}where the last inequality follows since $u \in   S_t\setminus N_t(u)$. Observe that for any $t$ and $s\in S_t$ we have $|N_t(s)\setminus S_t|\geq |N(s)\setminus S|\geq 1$ as no vertices outside of $S$ are contracted. Thus it follows that\begin{equation*} | S_t\cap N_t(u)| = \sum_{s\in S_t\cap N_t(u) } 1 \leq \sum_{s\in S_t\cap N_t(u)}  |N_t(s)\setminus S_t|.\end{equation*} Thus by \eqref{eq:nbhbdd} and the triangle inequality   
	\begin{equation*} |N_t(u)|\leq | S_t\cap N_t(u)|   +  \sum_{s\in S_t\setminus N_t(u)}  |N_t(s)\setminus S_t|\leq \sum_{s\in S_t }  |N_t(s)\setminus S_t|\leq \sum_{s\in S }  |N(s)\setminus S|, \end{equation*} where the last inequality holds as no vertices outside of $S$ are contracted. Thus as the degree of a vertex in $S$ never exceeds $\sum_{s\in S }  |N(s)\setminus S|$ during the contraction sequence, this is also a bound on the red degree. 
\end{proof}

Lemma \ref{lem:spider} guarantees that simply contracting each cloud of $G\replace H$ in any order gives a $\Delta(G)$-sequence leaving a trigraph over $G$. This will now be used to prove Theorem \ref{replace}, which holds for any choice of the rotation maps used to construct $G\replace H$.

\begin{restatable}{theorem}{replaceup}\label{replace}For any regular graph $G$ and any $\Delta(G)$-vertex regular graph $H$ we have \begin{equation*}\tww(G\replace H) \leq  \tww(G)   + \Delta(G).   \end{equation*} 
\end{restatable}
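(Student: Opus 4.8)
The plan is a two-phase contraction sequence for $G \replace H$. Write $d = \Delta(G)$, so $G$ is $d$-regular and $H$ is $\delta$-regular on $d$ vertices for some $\delta$. In the first phase I would collapse each cloud $C(v) = \{(v,i) : i \in [d]\}$ to a single vertex; in the second phase I would finish off the resulting trigraph over $G$ via Lemma~\ref{lem:degreetww}.

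For the first phase, the key structural fact is that each vertex $(v,i)$ of $G \replace H$ has exactly one neighbour outside its own cloud $C(v)$: by the definition of $\operatorname{Rot}_{G \replace H}$ its incident edges labelled $1$ through $\delta$ stay inside $C(v)$ (and form a copy of $H$ there), while its $(\delta+1)$-th incident edge leads to $\operatorname{Rot}_G(v,i)$, which lies in another cloud. Hence, taking $S = C(v)$, we get $|N(s) \setminus S| = 1$ for all $s \in S$ and $\sum_{s \in C(v)} |N(s) \setminus C(v)| = |C(v)| = d = \Delta(G)$, so Lemma~\ref{lem:spider} contracts $C(v)$ to a single vertex along a $\Delta(G)$-sequence. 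I would carry this out cloud by cloud in an arbitrary order. What needs checking is that, while one cloud is being contracted, the red degree of vertices in the \emph{other} clouds also stays at most $\Delta(G)$; this follows because $G$ is simple, so there is exactly one cross-cloud edge for each edge of $G$, whence every vertex outside $C(v)$ has at most one neighbour in $C(v)$, and moreover its single out-of-cloud edge is incident to just one other cloud, so it acquires at most one red edge toward contracted clouds over the whole first phase. After all clouds are contracted, each edge of $G$ has turned into a (red) edge between the corresponding pair of contracted vertices and nothing else remains, so we are left with a trigraph $T$ over $G$, reached from $G \replace H$ by a $\Delta(G)$-sequence.

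For the second phase, Lemma~\ref{lem:degreetww} gives that $T$, being a trigraph over $G$, admits a $(\tww(G)+\Delta(G))$-sequence down to $K_1$. Concatenating with the first phase produces a $\max\{\Delta(G),\,\tww(G)+\Delta(G)\}$-sequence, i.e.\ a $(\tww(G)+\Delta(G))$-sequence, for $G \replace H$, so $\tww(G \replace H) \le \tww(G)+\Delta(G)$.

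The main (mild) obstacle is purely the bookkeeping just described: confirming that collapsing a single cloud does not push the red degree above $\Delta(G)$ anywhere outside that cloud. All the real work is already packaged in Lemma~\ref{lem:spider} and Lemma~\ref{lem:degreetww}, so no new ideas should be needed.
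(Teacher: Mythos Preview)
Your proposal is correct and matches the paper's proof essentially line for line: both use Lemma~\ref{lem:spider} (with $|N(s)\setminus C(v)|=1$ for each $s\in C(v)$, summing to $\Delta(G)$) to collapse the clouds one by one, then invoke Lemma~\ref{lem:degreetww} on the resulting trigraph over $G$. Your explicit check that vertices outside the currently contracted cloud also stay at red degree at most $\Delta(G)$ is the bookkeeping the paper compresses into the single remark that the clouds are disjoint with at most one edge between any pair.
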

\begin{proof} 	
	Let $G_n, \dots, G_1$ be a $\tww(G)$-sequence for $G$. The first stage is to contract each of the `clouds', that is the graphs $H_v$ induced by the vertex sets $C(v)= \{(v,i): i\in [d]\}$ for each $v\in V(G)$. We do these one by one in any order. Since each vertex $c\in C(v)$ satisfies $|N_{G\replace H}(c)\setminus C(v)|=1 $, and each of these neighbours is distinct by construction we have $\sum_{c\in C(v) }|N_{G\replace H}(c)\setminus C(v)| = \Delta(G)$. Thus, by Lemma \ref{lem:spider}, there exists a $ \Delta(G)$-sequence reducing $C(v)$ to a single vertex (we call $v$) in $G\replace H$. Since the sets $\{C(v)\}_{v\in V(G)}$ are disjoint and there is at most one edge linking each pair we have that there is a $\Delta(G)$-sequence reducing the graph $G\replace H$ to a trigraph over $G$. 
	
	The second stage is to reduce this trigraph over $G$ using a $\tww(G)$-sequence for $G$. Thus by Lemma \ref{lem:degreetww} we have $\tww(G\replace H) \leq \max\left\{ \Delta(G) , \tww(G)+ \Delta(G) \right\}=\tww(G)+ \Delta(G)  $, as claimed. 
\end{proof}

Since most bounds on $\tww(G\star H)$ in this paper, for some product $\star$, depend on both $\tww(G)$ and $\tww(H)$ it might appear odd that the bound above does not depend on $\tww(H)$. However, recall that  $|V(H)|=\Delta(G)$ and so we have $\tww(H)\leq \Delta(G)-1$. The following lower bound is almost immediate.
\begin{lemma}\label{replacelow2}For any regular graph $G$ and any $\Delta(G)$-vertex regular graph $H$ we have \begin{equation*}\tww(G\replace H) \geq \tww( H).   \end{equation*} 
\end{lemma}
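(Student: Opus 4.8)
The statement $\tww(G\replace H)\geq \tww(H)$ should follow immediately from the fact that a single cloud $C(v)$ of the replacement product induces a copy of $H$, combined with Proposition~\ref{prop:induced} (twin-width is monotone under taking induced subgraphs). So the whole proof is essentially one sentence: pick any vertex $v\in V(G)$, observe that the subgraph of $G\replace H$ induced on $C(v)=\{(v,i):i\in[d]\}$ is isomorphic to $H$ (by the first case of the rotation map $\operatorname{Rot}_{G\replace H}$, the edges within the cloud are exactly those given by $\operatorname{Rot}_H$, i.e.\ the edges of $H$), and conclude $\tww(G\replace H)\geq \tww(G\replace H[C(v)])=\tww(H)$.

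\textbf{Steps in order.} First I would recall the definition of $\replace$ and isolate the within-cloud edges: for $i\le\delta$, $\operatorname{Rot}_{G\replace H}((v,k),i)=((v,m),j)$ where $(m,j)=\operatorname{Rot}_H(k,i)$, so the neighbours of $(v,k)$ inside $C(v)$ are precisely the $\delta$ vertices $(v,m)$ with $km\in E(H)$; the only edge leaving the cloud is the one labelled $\delta+1$. Hence the map $i\mapsto(v,i)$ is a graph isomorphism from $H$ onto the induced subgraph $G\replace H[C(v)]$. Second, apply Proposition~\ref{prop:induced} to get $\tww(G\replace H)\geq\tww\big(G\replace H[C(v)]\big)=\tww(H)$. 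That is all.

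\textbf{Main obstacle.} There really isn't one — this is a routine ``induced subgraph'' argument and the only thing to be careful about is matching the rotation-map formalism to the informal ``each cloud is a copy of $H$'' description already given in the text after the definition of the replacement product. In fact the excerpt states outright that ``for each $u\in V(G)$ the $d$ vertices of its cloud are a copy of $H$,'' so one may simply invoke that observation and Proposition~\ref{prop:induced}. The proof is two lines; the label ``The following lower bound is almost immediate'' in the excerpt is accurate.
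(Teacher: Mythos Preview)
Your proposal is correct and matches the paper's own proof essentially verbatim: the paper simply observes that each cloud in $G\replace H$ induces a copy of $H$ and then invokes Proposition~\ref{prop:induced}. Your additional unpacking of the rotation map is fine but not needed, since the paper already records that each cloud is a copy of $H$ immediately after the definition of the replacement product.
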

\begin{proof}
	Each cloud in $G \replace H$ induces a copy of $H$, so the bound follows from Proposition \ref{prop:induced}.
\end{proof}
We now give an example which shows Theorem \ref{replace} is tight up-to a constant factor. Let $\mathbb{F}_q$ be the field with $q$ elements. Then for  $q = 1 \text{ mod } 4$,  the \emph{Paley} graph $\operatorname{P}(q)$ has vertex set $\mathbb{F}_q$ and edge set $\left\{(a,b) : a-b =c^2 \text{ where } c\in \mathbb{F}_q \right\} $. The finite field $\mathbb{F}_q $ contains $(q-1)/2$ distinct squares, thus $\operatorname{P}(q)$ is $\frac{q-1}{2}$-regular. It is known by \cite[Theorem 1.4.]{AhnBounds} that $\tww(\operatorname{P}(q))= (q-1)/2$. For the construction, take $K_{q+1} \replace P(q) $ where $q$ satisfies $q = 1 \text{ mod } 4$, $K_{q+1}$ is a clique on $q+1$ vertices, and $P(q)$ is the Paley graph on $q$ vertices. Since  $\tww(\operatorname{P}(q))= (q-1)/2$ by \cite[Theorem 1.4.]{AhnBounds}, Lemma \ref{replacelow2} shows that Theorem \ref{replace} is tight up-to a constant factor (just slightly bigger than $2$) for the graph $K_{q+1} \replace P(q) $.

We now prove a less trivial lower bound. An interesting consequence of this  bound is that the twin-width of the replacement product of any two (non-trivial) graphs is always at least two.  
\begin{restatable}{theorem}{replacelow}\label{replacelow}For any regular graph $G$ and any $\Delta(G)$-vertex regular graph $H$ we have \begin{equation*}\tww(G\replace H) \geq \left\lceil \sqrt{2\cdot \Delta(H)}\right\rceil.   \end{equation*} 
\end{restatable}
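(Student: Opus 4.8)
The plan is to argue by contradiction. Suppose $\tww(G\replace H)\le r-1$ where $r=\lceil\sqrt{2\Delta(H)}\rceil$, fix a contraction sequence witnessing this (so every trigraph occurring in it has red degree at most $r-1$), and derive a contradiction by finding a trigraph in the sequence with a vertex of red degree at least $r$. Write $\delta=\Delta(H)$ and $d=|V(H)|$; since $H$ is $\delta$-regular and $G$ is $d$-regular we have $d\ge\delta+1$, and $d>\sqrt{2\delta}$ once $\delta\ge 2$. The cases $\delta\le 1$ I would dispose of separately: $\delta=0$ is trivial, and for $\delta=1$ one checks from the rotation maps that $G\replace H$ is $2$-regular with no triangle and no $4$-cycle, so every pair of its vertices has symmetric difference of size at least $2$ and Lemma~\ref{lowerbound} already gives $\tww\ge 2=r$. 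So assume $\delta\ge 2$ from now on.

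The core of the argument uses the clouds. Fix any cloud $C(v)$. During the sequence the partition of $C(v)$ obtained by grouping its vertices according to which current vertex contains them only coarsens, from $d$ parts to one, so exactly $d-1$ contractions merge two current vertices that each meet $C(v)$. Each of the $d\delta/2$ edges of $H$ inside $C(v)$ disappears (as a loop) at exactly one such contraction, so by pigeonhole some one of them, say the contraction of current vertices $\xi$ and $\eta$ into $\zeta$, destroys more than $\tfrac{d\delta/2}{d-1}>\delta/2$ of these edges. With $X=\xi\cap C(v)$ and $Y=\eta\cap C(v)$, the number of $H$-edges between $X$ and $Y$ exceeds $\delta/2$, so $|X|\cdot|Y|>\delta/2$ as $H$ is simple, and by AM--GM $|X|+|Y|\ge 2\sqrt{|X||Y|}>\sqrt{2\delta}$; being an integer, $|X|+|Y|\ge r$.

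Now $Z:=\zeta\cap C(v)=X\cup Y$ contains $|Z|\ge r\ge 2$ cloud vertices. Each $(v,a)\in Z$ has a unique neighbour $t_a$ outside $C(v)$; because $G$ is simple and $\operatorname{Rot}_G$ is an involution, the vertices $t_a$ ($a\in Z$) lie in pairwise distinct clouds and each $t_a$ has $(v,a)$ as its only neighbour in $C(v)$. For each $a\in Z$ with $t_a\notin\zeta$, the trigraph edge between $\zeta$ and the current vertex containing $t_a$ is present (since $(v,a)\sim t_a$) and must be red, because $\zeta$ also contains some $(v,a')$ with $a'\ne a$, and $(v,a')$ is not adjacent to $t_a$. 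Hence the red degree of $\zeta$ is at least the number of distinct current vertices, other than $\zeta$, that contain some $t_a$; in the generic situation where no $t_a$ has yet been merged into $\zeta$ and no two $t_a$ lie in one current vertex, this is $|Z|\ge r$, contradicting the red-degree bound.

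The hard part will be ruling out the two degenerate possibilities just flagged: that some external neighbour $t_a$ has already been absorbed into $\zeta$, or that several $t_a$ have already been identified with one another. To handle this I would take the critical contraction above to be the \emph{earliest}, over all clouds, that destroys more than $\delta/2$ of a cloud's internal edges, and then show that any such prior absorption or identification either forces an earlier contraction of this same type (contradicting minimality) or can only have occurred after some current vertex already reached red degree $r$. Establishing this robustly is the main obstacle; everything else is the bookkeeping sketched above.
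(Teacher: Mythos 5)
Your counting in the first two paragraphs is sound: among the $d-1$ contractions that coarsen the partition of $C(v)$ induced by the current trigraph, one must destroy more than $\delta/2$ of the $d\delta/2$ internal $H$-edges of $C(v)$, and the AM--GM computation then gives $|X|+|Y|\ge\lceil\sqrt{2\delta}\rceil$. But the step where the merged vertex $\zeta$ acquires red degree at least $|Z|=|X|+|Y|$ requires the external neighbours $\{t_a : a\in Z\}$ to lie in $|Z|$ distinct current vertices, none of which is $\zeta$, and you have explicitly left this unverified. This is a genuine hole, not a technicality: the critical contraction you single out can occur arbitrarily late in the sequence, by which point $\zeta$ may already straddle several clouds and have absorbed some of the $t_a$, and two of the $t_a$ (which live in distinct clouds) may already have been identified with one another. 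Your proposed remedy -- take the earliest such critical contraction over all clouds -- does not obviously close the gap, because an earlier inter-cloud merge that produces such a degeneracy need not itself destroy more than $\delta/2$ of any single cloud's internal edges, so minimality gives you nothing to push against.

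The paper's proof picks a pivot point that rules the degeneracies out by construction: it considers the \emph{first} contraction in the whole sequence that merges two vertices from different clouds. Up to that instant every contraction has been intra-cloud, so every current vertex is contained in a single cloud and no external neighbour of any cloud vertex has been merged with anything. At that first inter-cloud merge of $x\in C_u$ with $y\in C_v$, if the maximum red degree inside $C_u$ (resp.\ $C_v$) is $i$ (resp.\ $j$), the new vertex acquires at least $\lceil\delta/i\rceil+\lceil\delta/j\rceil$ red edges into the two clouds, and one also has $\tww(G\replace H)\ge\max\{i,j\}$; minimising $\max\{\,i,\;j,\;\lceil\delta/i\rceil+\lceil\delta/j\rceil\,\}$ over $i,j$ recovers $\lceil\sqrt{2\delta}\rceil$. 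Your intuition -- that one cannot keep a cloud in many small pieces (costly in external red edges) nor collapse it quickly (costly in internal red edges) -- is exactly the paper's, but the paper's choice of contraction to analyse eliminates the bookkeeping you were stuck on rather than requiring it to be argued away.
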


\begin{proof}
	Let $C_v=\{(v,i) : i\in [\Delta(G)]\}$ be the cloud of $\Delta(G)$ vertices at the vertex $v\in V(G)$. Observe that if the cloud is contracted to a single vertex $w$ before there are any contractions between two vertices $x,y \in N(C_v)\setminus C_v$ then $\Delta(G)\geq \Delta(H)$ red edges must be adjacent to $w$. 
	
	We now consider the first contraction in a $\tww(G \replace H) $-sequence for $G\replace H$ where two vertices from different clouds are contracted. Let this be the $\tau$-th  contraction and observe that such a time must exist as $|V(G)|>1$. Up to this point the only contractions that could have occurred are between pairs of vertices in the same cloud. It follows that if there are any contractions before time $\tau$ then the vertices remaining have a red degree which is at least the number of vertices that have been contracted into them (since each vertex in a cloud leads to a distinct vertex in another cloud). Let $x,y$ be the two vertices,  where $x\in C_u$ and $y\in C_v$ and $u\neq v$, to be contracted at time $\tau$. After this contraction all edges from each of $x$ and $y$ to vertices within their clouds will be red. Observe that if the maximum red degree of a vertex in $x's$ cloud at time $\tau$ is $i$, then $x$ is adjacent to at least $\lceil \Delta(H)/i \rceil$ vertices in its own cloud $C_u$ at time $\tau$. Similarly, if the maximum red degree in $C_v$ at time $\tau$ is $j$ then $y$ is adjacent to at least $\lceil \Delta(H)/j \rceil$ vertices in $C_v$ at time $\tau$. When we contract $x$ and $y$ to $w$ the red degree of $w$ is at least $\lceil \Delta(H)/i \rceil + \lceil \Delta(H)/j \rceil$. 
	
	It follows that, for some $i,j\leq \Delta(G)$, we have 
	\begin{equation*}\tww(G \replace H) \geq \max \left\{\left\lceil \Delta(H)/i \right\rceil + \left\lceil \Delta(H)/j \right\rceil,\; i , \; j \right\}.  \end{equation*} We thus obtain the bound $\tww(G \replace H) \geq \left\lceil \sqrt{2\cdot \Delta(H)}\right\rceil$ by minimising over all $i$ and $j$. \end{proof}

Using our bounds for replacement products, and adapting constructions from \cite{Alon,Reingold}, we were able to construct a sequence of bounded degree expanders with poly-log twin-width\footnote{For the full construction, see Section 4.3 in the first version of this paper \href{https://arxiv.org/pdf/2202.11556v1.pdf}{ArXiv:2202.11556v1}.}. However, we do not include this result as a construction of expanders with bounded twin-width is given by \cite{TWII}.  
\subsection{Zig-Zag Product}

If $G$ is a $d$-regular graph on $[n]$ with rotation map $\operatorname{Rot}_G $ and $H$ is a $\delta$-regular graph on $[d]$ with rotation map $\operatorname{Rot}_H $ their \emph{zig-zag product} $G \zigzag H$ is the $\delta^2$-regular graph on $[n] \times [d]$ whose rotation map $\operatorname{Rot}_{G \zigzag H}$ is given by 
\begin{equation*}\operatorname{Rot}_{G \zigzag H}\left((v,k),(i,j)\right) =  
	\left((w,\ell),(j',i')\right),\end{equation*}where we find $ w,\ell,j'$ and $i'$ by the using the rotation functions for $G$ and $H$ as follows\begin{equation*} (k',i')= \operatorname{Rot}_H(k,i), \quad (w,\ell')= \operatorname{Rot}_G(v,k'), \quad \text{ and }\quad (\ell,j')= \operatorname{Rot}_H(\ell',j) . \end{equation*}
Note that in the definition of the rotation function $\operatorname{Rot}_{G \zigzag H}\left((v,k),(i,j)\right)$ we identify each $(i,j)$, where $i,j\in [d]$, with some $k\in [d^2]$.    

Similarly to the replacement product, it will help to think of each vertex of $G$ being replaced by a `cloud' of $d$ vertices, however there are a lot of differences between the edge sets of the replacement and zig-zag product graphs. Recall that in $G \replace H $ each cloud $C(v)$ induces a copy of the graph $H$, in contrast each cloud $C(v)$ in $G$ induces an independent set. One way to see the edge set of the zig-zag product is that $(u,i)$ and $(v,j)$ are connected by an edge in $G\zigzag H$ if there is a walk of length $3$ from $(u,i)$ to $(v,j)$ in $G\replace H$ where the first and third edges crossed are in clouds (correspond to edges in $H$) and the second edge corresponds to an edge in $G$, see Figure \ref{fig:repzigzag} for an example.

\begin{restatable}{theorem}{zigzagprod}\label{zigzag}For any regular graph $G$ and any $\Delta(G)$-vertex regular graph $H$ we have \begin{equation*}\tww(G\zigzag H) \leq \max\left\{ \Delta(H)^2(\Delta(G)-\Delta(H) +1) ,\; \tww(G)+ \Delta(G) \right\} .   \end{equation*} 
\end{restatable}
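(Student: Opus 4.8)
The plan is to mimic the two-stage strategy used for the replacement product in Theorem~\ref{replace}: first contract each cloud $C(v)=\{(v,i):i\in[\Delta(G)]\}$ down to a single vertex, leaving a trigraph over $G$, and then run a $\tww(G)$-sequence on that trigraph, invoking Lemma~\ref{lem:degreetww} at the end. The difference from the replacement product is that in $G\zigzag H$ each cloud $C(v)$ induces an independent set, while the edges joining $C(v)$ to the rest of the graph are far more numerous and their structure is governed by the rotation-map composition defining $\zigzag$. So the crux will be to bound the red degree created during the cloud-contraction stage. I would first record the basic adjacency facts: by the description following the definition, $(u,i)(v,j)\in E(G\zigzag H)$ corresponds to a length-$3$ walk in $G\replace H$ whose first and third steps are inside clouds (edges of $H$) and whose middle step is an edge of $G$. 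Hence a vertex $(v,k)$ is joined outside its cloud only to vertices lying in clouds $C(w)$ with $vw\in E(G)$, and within each such $C(w)$ it has at most $\Delta(H)^2$ neighbours (one "zig" of $\le\Delta(H)$ choices, the forced $G$-step, one "zag" of $\le\Delta(H)$ choices). So $|N(C(v))\setminus C(v)|\le \Delta(G)\cdot\Delta(H)^2$, but a sharper count is needed.

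For the cloud-contraction stage I would not use Lemma~\ref{lem:spider} as a black box (the cloud is an independent set, so $N(s)\setminus S$ is large), but instead contract the vertices of a single cloud $C(v)$ one at a time and track the red degree of the partially-contracted vertex directly. After contracting some subset of $C(v)$ into a vertex $z$, the red neighbours of $z$ are exactly the vertices outside $C(v)$ in the symmetric difference of the neighbourhoods of the contracted originals; these all lie in clouds $C(w)$ with $vw\in E(G)$. The key estimate to nail down is: over all of $C(v)$, how many distinct external vertices appear? Each vertex $(v,k)$ "uses" its $k$-th $G$-edge only for the zigs labelled so that $\operatorname{Rot}_H(k,\cdot)$ returns first coordinate $k$ — i.e. the $\delta=\Delta(H)$ neighbours of $k$ in $H$ — and for each such choice the $G$-step is forced and then there are $\Delta(H)$ zags. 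The bound $\Delta(H)^2(\Delta(G)-\Delta(H)+1)$ in the statement strongly suggests the right accounting is: of the $\Delta(G)$ clouds adjacent to $C(v)$, the edges from $C(v)$ are distributed so that contracting all but the last few vertices of $C(v)$ leaves a vertex whose external red neighbourhood is confined to $\Delta(G)-\Delta(H)+1$ clouds (the "last" $\Delta(H)-1$ ports of the cloud, once their partners are already merged, contribute no new red edges), each contributing $\le\Delta(H)^2$ vertices. I would make this precise by choosing the contraction order inside each cloud carefully and, crucially, synchronising the order across clouds — contract port $1$ of every cloud first, then port $2$ of every cloud, etc. — exactly as in the Cartesian/tensor proofs, so that when a port of $C(v)$ is merged, the matching structure on the other side has already collapsed.

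Once the cloud stage produces a trigraph over $G$ with red degree at most $\Delta(H)^2(\Delta(G)-\Delta(H)+1)$, the second stage runs a $\tww(G)$-sequence; by Lemma~\ref{lem:degreetww} this trigraph over $G$ has twin-width at most $\tww(G)+\Delta(G)$, and taking the maximum of the two stage-bounds gives the claimed inequality. The main obstacle I expect is the bookkeeping in the cloud stage: proving that the synchronised, port-by-port contraction order genuinely limits the external red neighbourhood of a partially-contracted cloud to $\Delta(G)-\Delta(H)+1$ clouds' worth of vertices (rather than the naive $\Delta(G)$), since this requires understanding which $(i,j)$-labelled zig-zag walks out of $(v,k)$ have already had both their endpoint-cloud partners contracted by the time $(v,k)$ is merged. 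Everything else — the adjacency characterisation, the $\Delta(H)^2$-per-cloud count, and the final appeal to Lemma~\ref{lem:degreetww} — is routine given the machinery already set up for the replacement product.
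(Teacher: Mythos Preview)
Your high-level strategy---two stages, with synchronised (round-robin) contraction of the clouds followed by Lemma~\ref{lem:degreetww} applied to the resulting trigraph over $G$---is exactly the paper's approach. Where your proposal drifts is in the accounting for the cloud stage. The claim that the external red neighbourhood of a partially contracted cloud is ``confined to $\Delta(G)-\Delta(H)+1$ clouds'' is not correct: once enough ports of $C(v)$ have been merged, the resulting vertex can touch all $d=\Delta(G)$ neighbouring clouds, so the factor $d-\delta+1$ does not arise by counting \emph{which} clouds are reached.

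The paper's argument is instead a crossover between two bounds indexed by the iteration number $i$ (where one contraction has been done in each cloud per iteration). Writing $\delta=\Delta(H)$, a merged vertex $u\in C(v)$ at iteration $i$ is formed from at most $i$ original vertices, each of degree $\delta^2$, so its degree is at most $i\delta^2$. Simultaneously, each neighbouring cloud has shrunk to at most $d-i+1$ vertices, and $u$'s originals reach at most $i\delta$ neighbouring clouds in total, giving the alternative bound $i\delta(d-i+1)$. Both bounds hold at every iteration, so the red degree is at most $\min\{\,i\delta^2,\ i\delta(d-i+1)\,\}$; maximising this minimum over $i\in[d-1]$ gives the peak at $i=d-\delta+1$, where both bounds equal $\delta^2(d-\delta+1)$. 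That is the missing idea in your bookkeeping: the $d-\delta+1$ comes from the \emph{iteration} at which the growing bound $i\delta^2$ overtakes the shrinking bound $i\delta(d-i+1)$, not from a restriction on which clouds are hit.
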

\begin{proof} 	
	Let $G_n, \dots, G_1$ be a $\tww(G)$-sequence for $G$ and for convenience denote $\delta=\Delta(H)$ and $d=\Delta(G)$. The first stage is to contract each of the `clouds', that is the vertex sets $C(v)=\{(v,i): i\in [d]\}$ for each $v\in V(G)$. We could just contract each of the clouds one by one, as we do in Theorem \ref{replace}, which gives a bound of $\delta^2\cdot d$ on the maximum red degree created in the first stage. However we will be a little more careful to obtain a better bound of $\delta^2\cdot(d-\delta+1)$ for the first stage.

	For the first stage order the vertices $\{v_1,\dots, v_n\}$ of $G$ arbitrarily. The clouds $\{C(v_j)\}_{j\in [n]}$ then inherit the same ordering. We will iterate through this ordering $d-1$ times, contracting a pair of vertices in the corresponding cloud at each step; more formally, at each step $j\in [n]$ of iteration $i\in [d-1] $ we contract a pair of vertices in the cloud $C(v_j)$. Thus at any iteration $i\in [d-1]$ there are at most $d-i+1$ and at least $d-i$ vertices left in each cloud. 
	
	Observe that in $G \zigzag H$ for each vertex $u\in C(v)$, where $v\in V(G)$, there are $\delta$ other vertices $v'\in V(G)$ where $vv'\in E(G)$ such that $u$ is connected to $\delta$ different vertices in each of the sets $C(v')$. Thus, for any $u\in C(v)$ and $v\in V(G)$, if there have been $i$ contractions in $C(v)$ then $u$ can be formed from merging at most $i$ vertices and so $u$ has degree at most $i\cdot \delta^2$. On the other hand as each other cloud $C(x_j)$, for $j\in [d]$ where $vx_j \in E(G)$, contains at most $d-i+1$ vertices after there have been $i$ contractions in $C(v)$ the number of edges from $u$ to each $C(x_j)$ is at most $d-i+1$. Thus the degree of $u$ is at most $i\delta \cdot (d-i+1)$. Thus, since both bounds hold the degree of $u$ is at most $\max_i\left\{ i\delta^2, i\delta (d-i+1) \right\} $, which is maximised when $i=d-\delta +1$ and thus there is a $\delta^2(d-\delta +1)$-sequence  transforming $G\zigzag H$ into a trigraph over $G$. 
	
	The second stage is to reduce this trigraph over $G$ using a $\tww(G)$-sequence for $G$. Thus by Lemma \ref{lem:degreetww} we have $\tww(G\zigzag H) \leq \max\left\{ \Delta(H)^2(\Delta(G)-\Delta(H) +1) , \tww(G)+ \Delta(G) \right\}  $, as claimed. 
\end{proof}

\section{Conclusion} \label{sec:conclude} 
In this paper we proved bounds on the twin-width of many of the most commonly known graph products. We applied these bounds to determine the twin-width of some common product graphs, in turn showing most of the bounds are tight. It is natural to ask whether some of our bounds can be improved, even for (interesting) special cases. In particular, our examples in Section \ref{sec:tensor} show that both the $\tww(G)\cdot \Delta(H)$ term and  $\Delta(H)$ terms in the bound on $\tww(G\times H)$ given by Theorem \ref{thm:tensor} are necessary. It would be nice to find an example of two graphs $G,H$ with non-zero twin-width for which Theorem \ref{thm:tensor} is tight. 

We also showed that the modular product produces an $n$ vertex graph with twin-width $\sqrt{n}$ from two paths of equal length (each of twin-width $1$). It may be interesting to see if there is any restriction one can place on graphs $G$ and $H$ so that their modular product $\gmod{G}{H}$ has bounded twin-width. 
\section*{Acknowledgements}
The authors thank Kitty Meeks and Jessica Enright for insightful discussions.
We also give our thanks to anonymous referees who reviewed an earlier version of this paper and whose feedback has been incorporated.

\end{document}